\newtheorem{thm}{Theorem}
\newtheorem{prob}{Problem}
\newtheorem{lem}{Lemma}[section]
\newtheorem{claim}{Claim}[section]
\begin{document}
\title{Counting substructures and eigenvalues I: triangles}

\author{Bo Ning\thanks{College of Computer Science, Nankai University, Tianjin 300350, P.R.
China. Email: bo.ning@nankai.edu.cn. Partially supported
by the NSFC grant (No.\ 11971346).}~~~
~~~Mingqing Zhai\thanks{Corresponding author. School of Mathematics and Finance, Chuzhou
University, Chuzhou, Anhui 239012, P.R. China. Email: mqzhai@chzu.edu.cn. Partially supported by the NSFC grant
(No. 12171066) and APNSF (No. 2108085MA13).}}

\maketitle
{\flushleft\large\bf Abstract:}
Motivated by the counting results for color-critical subgraphs by Mubayi
[Adv. Math., 2010], we study the phenomenon behind Mubayi's theorem
from a spectral perspective and start up this problem with the fundamental case of triangles.
We prove tight bounds on the number of copies of triangles in a graph with a prescribed number of
vertices and edges and spectral radius. Let $n$ and $m$ be the order and size
of a graph. Our results extend those of Nosal, who proved there is one
triangle if the spectral radius is more than $\sqrt{m}$, and of Rademacher, who proved there are
at least $\lfloor\frac{n}{2}\rfloor$ triangles if the number of edges is more than
that of 2-partite Tur\'an graph. These results, together with two spectral inequalities
due to Bollob\'as and Nikiforov, can be seen as a solution to the case of triangles
of a problem of finding spectral versions of Mubayi's theorem. In addition,
we give a short proof of the following inequality due to Bollob\'as and Nikiforov
[J. Combin. Theory Ser. B, 2007]: $t(G)\geq \frac{\lambda(G)(\lambda^2(G)-m)}{3}$
and characterize the extremal graphs.
Some problems are proposed in the end.

\begin{flushleft}
\textbf{Keywords:} Triangles; Spectral radius; Counting
\end{flushleft}
\textbf{AMS Classification:} 05C50; 05C35

\section{Introduction}

The fundamental Mantel's theorem \cite{M07} in 1907
determines the maximum number of edges in a triangle-free
graph of order $n$, that is, $\lfloor\frac{n^2}{4}\rfloor$;
and the extremal graph is
$T_{n,2}\cong K_{\lfloor\frac{n}{2}\rfloor,\lceil\frac{n}{2}\rceil}$.
Rademacher (see \cite{E55}) in 1941 showed that an $n$-vertex graph on at least
$e(T_{n,2})+1$ edges contains at least $\lfloor \frac{n}{2}\rfloor$ triangles.
Later, Erd\H{o}s \cite{E62,E62-2} proved that if $k<cn$ for some small constant $c$,
then $\lfloor\frac{n^2}{4}\rfloor+k$ edges guarantee at least
$k\lfloor\frac{n}{2}\rfloor$ triangles. Furthermore, Erd\H{o}s
also conjectured the same to be true for $k<\frac{n}{2}$,
which was finally proved by Lov\'{a}sz and Simonovits \cite{LS83}.
For more results, we refer the reader to the historical comments
and literatures therein (see Chapter 6 of \cite{B78}).

Mubayi \cite{M10} extended above theorems to the class of color-critical graphs,
that is, graphs whose chromatic number can be decreased by removing an edge.
Throughout this paper, we denote by $T_{n,k}$ the $k$-partite Tur\'an graph
and $t_{n,k}=e(T_{n,k})$.
\begin{thm}[Mubayi \cite{M10}]\label{Thm:Mubayi}
Let $k\geq 2$ and $F$ be a color-critical graph with chromatic number
$\chi(F)=k+1$. There exists $\delta=\delta_F>0$ such that if $n$ is
sufficiently large and $1\leq s:=e(G)-t_{n,k} <\delta n$,
then every $n$-vertex graph with more than $t_{n,k}$ edges contains at
least $c(n,F)\cdot (e(G)-t_{n,k})$ copies of $F$, where $c(n,F)$ is
the minimum
number of copies of $F$ in the graph obtained from $T_{n,k}$ by adding one edge.
\end{thm}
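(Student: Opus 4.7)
The plan is to combine the Erdős–Simonovits stability theorem with a counting argument that uses a color-critical edge of $F$. Fix a color-critical edge $e_0 = uv$ of $F$, so that $F - e_0$ admits a proper $k$-coloring in which $u$ and $v$ lie in a common color class; call this class the \emph{absorbing class}. The strategy is to locate at least $s$ ``extra'' edges inside a near-Turán partition of $G$ and to use each as the image of $e_0$ in an embedding of $F$.

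First, I would run stability. For any $\eta > 0$, if $\delta = \delta_F$ is small enough and $n$ is large, the Erdős–Simonovits stability theorem applied to the $(k+1)$-chromatic graph $F$ gives a partition $V(G) = V_1 \cup \cdots \cup V_k$ with parts of size $n/k + o(n)$, with a set $B$ of at most $\eta n^2$ edges inside parts, and with a set $M$ of at most $\eta n^2$ non-edges between parts; a rebalancing step ensures every vertex has at most $\sqrt{\eta}\,n$ ``wrong'' neighbors. Writing $\Delta = t_{n,k} - \sum_{i<j}|V_i||V_j| \geq 0$ for the imbalance deficit, counting edges through the partition yields $s = e(G) - t_{n,k} = |B| - |M| - \Delta$, and in particular $|B| \geq s$.

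Next, I would count copies of $F$. For each bad edge $xy \in B$ with $x, y \in V_i$, I count embeddings $\varphi : V(F) \to V(G)$ with $\varphi(e_0) = xy$ by mapping the absorbing class into $V_i$ and the remaining color classes injectively into the other parts. Since $G$ contains all but an $o(1)$-fraction of the cross-edges and the parts are nearly balanced, the number of such embeddings is at least $(1 - o(1))$ times the analogous count in $T_{n,k}$ with the edge $xy$ added inside its $V_i$-part; by the definition of $c(n,F)$, the latter is at least $c(n,F)$ unlabeled copies (after dividing by the automorphisms of $F$ fixing $e_0$). Summing over $|B| \geq s$ bad edges, and noting that each copy of $F$ contains only boundedly many color-critical edges so the overcounting is absorbed in the leading order, gives at least $(1 - o(1))\,c(n,F)\cdot s$ copies of $F$.

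The main obstacle is closing the $o(1)$ gap to obtain the sharp constant $c(n,F)$. I would resolve it by a case split tuned by the relative sizes of $|M|$, $\Delta$, and the stability error. If $|M| + \Delta$ is not negligible, then the ``extra'' bad edges $|B| = s + |M| + \Delta$ strictly exceed $s$, and the surplus pays for the $o(1)$-loss directly. If, on the other hand, $|M|$ and $\Delta$ are both very small, then $G$ is essentially $T_{n,k}$ with exactly $s$ edges added inside parts, and each such edge contributes at least $c(n,F)$ copies of $F$ by definition with no asymptotic loss at all. The delicate quantitative matching of $\eta \ll \delta_F$ against the extremal configuration realizing $c(n,F)$ is, I expect, the technical heart of the argument.
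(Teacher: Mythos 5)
The paper only \emph{quotes} Theorem~\ref{Thm:Mubayi} from Mubayi~\cite{M10}; it supplies no proof, so there is no in-paper argument to compare against, and I will assess your sketch against Mubayi's original.

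Your high-level strategy --- stability to obtain a near-Tur\'an partition, then counting copies of $F$ anchored at a color-critical edge $e_0$ mapped to a ``bad'' edge inside a part --- is indeed the right skeleton. However, the way you propose to close the gap does not work as stated, and the missing device is not cosmetic. The stability theorem only gives $|B|,|M|\le\eta n^2$, which is \emph{enormously} larger than the surplus $s<\delta n$ you are trying to account for; in particular $|B|$ need not be anywhere near $s$, and the partition you extract has no reason to be the ``canonical'' one with small $|M|$ and $\Delta$. Your Case~1 (``surplus pays for the $o(1)$ loss'') does not balance: the total loss is on the order of $|B|\cdot o(1)\cdot n^{|V(F)|-2}$, while the surplus buys at most $(|M|+\Delta)\cdot n^{|V(F)|-2}$, and when $1\ll|M|+\Delta\ll s$ these do not compare favorably. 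Your Case~2 only handles $|M|,\Delta=O(1)$, and nothing forces the graph into one case or the other. There is also a soft spot in the overcounting step: with a sharp constant $c(n,F)$ as the target, waving away multiplicities as ``absorbed in the leading order'' is not permissible; one must arrange the count so that each copy of $F$ using exactly one intra-part edge is counted exactly once. The ingredient Mubayi uses that you are missing is a minimum-degree reduction in the style of Erd\H{o}s and Lov\'asz--Simonovits: if some vertex has degree substantially below $(1-1/k)n$, delete it and observe that the edge surplus relative to $t_{n-1,k}$ does not decrease, so one can induct; once all degrees are near $(1-1/k)n$, the near-Tur\'an structure is rigid --- the partition is essentially unique, $|M|$ and $\Delta$ are driven down to $O(1)$, $|B|=s+O(1)$ --- and only then does your anchored counting give the clean bound $c(n,F)\cdot s$ without an unabsorbable $o(1)$ loss. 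Without that reduction (or an equivalent tightening), the sketch remains a plan rather than a proof, as you yourself anticipate in the final paragraph.
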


Mubayi's result also extended the following theorem of Simonovits
in the sense of counting color-critical subgraphs.

\begin{thm}[Simonovits \cite{S68}]
Let $k\geq 2$ and $F$ be a color-critical graph with $\chi(F)=k+1$. Then
$ex(n,F)=t_{n,k}$ for all large enough $n\geq n_0(F)$; moreover,
$T_{n,k}$ is the unique extremal graph.
\end{thm}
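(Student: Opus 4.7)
The plan is to prove Simonovits's theorem by combining the Erd\H{o}s--Stone density statement with a stability argument, and then exploiting color-criticality to rule out any deviation from $T_{n,k}$.

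First, the Erd\H{o}s--Stone--Simonovits theorem gives $ex(n,F) = (1 - \tfrac{1}{k} + o(1))\binom{n}{2} = t_{n,k}(1 + o(1))$, and since $T_{n,k}$ is $k$-colorable and hence $F$-free, this pins down $T_{n,k}$ as the asymptotic extremizer. Next, by the Erd\H{o}s--Simonovits stability theorem, any $F$-free graph $G$ with $e(G) \geq t_{n,k} - o(n^2)$ admits a $k$-partition $V_1 \cup \cdots \cup V_k$ (chosen to minimize the number of edges inside parts) where $|V_i| = n/k + o(n)$ and at most $o(n^2)$ edges lie inside the parts. After a routine cleanup that discards a vanishing set of vertices with too few cross-neighbors, one obtains a subgraph $G'$ in which every surviving vertex has at least $(1 - \tfrac{1}{k} - o(1))n$ neighbors in each other part.

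The heart of the argument is then to show, assuming $e(G) \geq t_{n,k}$, that no edge lies inside any part. Suppose for contradiction that $uv$ is an edge with $u, v \in V_i$ (both surviving in $G'$). Color-criticality of $F$ supplies an edge $xy \in E(F)$ with $\chi(F - xy) = k$ and a proper $k$-coloring $A_1, \ldots, A_k$ of $F - xy$ in which $x$ and $y$ both lie in $A_i$. I would map $x \mapsto u$ and $y \mapsto v$, then greedily embed $A_j \setminus \{x, y\}$ into $V_j$ using the common-neighborhood estimates guaranteed by the high cross-degrees in $G'$; for $n \geq n_0(F)$ the embedding succeeds, yielding a copy of $F$ in $G$ and a contradiction. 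Thus $G'$, and after reabsorbing the removed vertices by a short shifting argument also $G$ itself, is $k$-partite.

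Finally, since $G$ is $k$-partite with at least $t_{n,k}$ edges, the extremality of $T_{n,k}$ among $k$-partite graphs forces $G = T_{n,k}$. The main obstacle is the embedding step: one must verify that the cleanup leaves enough slack so that the common neighborhood of any small constant number of already-embedded vertices across the other parts still has size exceeding $|V(F)|$, which is exactly what dictates the threshold $n_0(F)$.
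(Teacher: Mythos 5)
The paper only cites Simonovits's theorem without proof, so there is nothing in the paper to compare against; I will assess your proposal on its own terms.

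Your overall strategy is the standard modern route to Simonovits's theorem: Erd\H{o}s--Stone for the asymptotic, Erd\H{o}s--Simonovits stability to get a near-$k$-partite structure, and the color-criticality of $F$ (an edge $xy$ with $\chi(F-xy)=k$ and a proper $k$-coloring placing $x,y$ in the same class) to forbid a within-part edge via a greedy embedding. The embedding step itself is described correctly, and the observation that $n_0(F)$ is governed by the need for common neighborhoods of $\leq |V(F)|$ embedded vertices to remain nonempty is right. Note one slip in the statement of the cleanup: a vertex cannot have $(1-\tfrac1k-o(1))n$ neighbors \emph{in each} other part, since each part has only about $n/k$ vertices; what you want is at most $o(n)$ non-neighbors in each other part, equivalently a total cross-degree of $(1-\tfrac1k-o(1))n$.

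The genuine gap is in ``discard a vanishing set of vertices \ldots and after reabsorbing the removed vertices by a short shifting argument, $G$ itself is $k$-partite.'' A vertex $v\in V_i$ that fails your cleanup test (many non-neighbors outside $V_i$) need \emph{not} have small degree: in a max-$k$-cut partition, the only a priori bound is $d_{V_i}(v)\le \deg(v)/k$, and $v$ can compensate a large deficit of cross-neighbors by many within-part edges, so $\deg(v)$ may comfortably exceed $(1-\tfrac1k)n$. Consequently, knowing that $G'=G-B$ is $k$-partite with $|B|=o(n)$ does \emph{not} force $B=\varnothing$: the trivial bound $e(G)\le e(G') + \sum_{v\in B}\deg_G(v)\le t_{n-|B|,k}+|B|\cdot n$ permits $e(G)\ge t_{n,k}$ with $|B|\ge 1$, and a more careful account of missing cross-edges versus gained within-part edges still fails to close the gap once the embedding threshold forces the cleanup parameter to be small compared to $1/|V(F)|$. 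This is precisely where the standard proofs do real work: they run a surplus-tracking minimum-degree iteration (delete any vertex of degree $<(1-\tfrac1k-\varepsilon)n$, show the surplus $e(G_i)-t_{n_i,k}$ strictly increases, hence the iteration is short) and then prove the embedding lemma under the combined hypotheses of high minimum degree, near-balanced max cut, and few within-part edges, carefully handling the remaining exceptional vertices. Your ``short shifting argument'' names no mechanism that replaces this bookkeeping, and as written the step does not go through.
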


Nikiforov \cite{N09-2} proved a spectral version of the color-critical theorem
as follows, which is stronger than Simonovits' theorem, as shown by Zhai
and Lin \cite{ZL21}.

\begin{thm}[Nikiforov \cite{N09-2}]\label{nikicolor}
Let $k\geq 2$ and $F$ be a color-critical graph with $\chi(F)=k+1$.
Then there exists a positive integer $n_0(F)$ such that
if $n\geq n_0(F)$ then $T_{n,k}$ is
the only extremal graph which attains the maximum
spectral radius and contains no $F$.
\end{thm}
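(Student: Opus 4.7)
The plan is to follow the classical two-step pattern of \emph{spectral stability followed by an exact-extremal upgrade}. Let $G$ be an $n$-vertex $F$-free graph attaining the maximum spectral radius, write $\lambda=\lambda(G)$, and let $\mathbf{x}$ denote its positive Perron eigenvector, normalised so that $\max_{v}x_{v}=1$. Since $\chi(T_{n,k})=k<k+1=\chi(F)$, the Tur\'an graph $T_{n,k}$ is itself $F$-free, whence $\lambda\geq \lambda(T_{n,k})\geq (1-\tfrac{1}{k})n-1$. Thus any extremal $G$ automatically sits in the ``large spectral radius'' regime where Erd\H{o}s--Simonovits-type stability can be applied.

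The first step is a spectral stability statement: every $F$-free graph with $\lambda(G)\geq (1-\tfrac{1}{k}-o(1))n$ can be transformed into $T_{n,k}$ by editing at most $o(n^{2})$ edges. I would derive this by combining the edge version of the Erd\H{o}s--Simonovits stability theorem for color-critical $F$ with the inequality $\lambda^{2}(G)\leq 2e(G)$ and the expansion $\lambda(T_{n,k})=(1-\tfrac{1}{k})n+O(1)$. The output is a partition $V(G)=V_{1}\cup\cdots\cup V_{k}$ satisfying $\bigl||V_{i}|-n/k\bigr|=o(n)$ in which only $o(n^{2})$ pairs are \emph{bad} (missing cross-edges or present in-part edges).

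The second and harder step uses $\mathbf{x}$ to upgrade this closeness to exact equality. Fixing $u$ with $x_{u}=1$, an eigenvector comparison yields $\deg(u)=(1-\tfrac{1}{k})n-o(n)$; more generally, every vertex $v$ with eigenvector entry not too small has a neighborhood almost coinciding with $V\setminus V_{j(v)}$, where $j(v)$ denotes its assigned part. For any remaining bad pair $vw$ I would apply a local swap---delete $vw$ when $v,w$ lie in the same $V_{j}$, or insert $vw$ when they lie in different parts---and check that (a) the swap cannot create a copy of $F$, using the color-critical edge of $F$ together with the $T_{n,k}$-like neighborhoods to preserve $F$-freeness, and (b) the Rayleigh quotient $\mathbf{x}^{T}A\mathbf{x}/\mathbf{x}^{T}\mathbf{x}$ strictly increases, contradicting maximality of $\lambda$. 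Hence no bad pair survives, so $G$ is complete $k$-partite; among such graphs $\lambda$ is maximised uniquely by the balanced one $T_{n,k}$ by a standard majorisation argument on the part sizes.

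I expect the principal obstacle to be the no-$F$-creation check inside the swap step: one must quantitatively control the eigenvector entries of the bad vertices and show that every potential $F$-copy created by the swap would already embed in $G$, contradicting $F$-freeness. A careful greedy ordering of the swaps, performed in decreasing order of the entry-product $x_{v}x_{w}$, together with the fact that $F$ minus its color-critical edge is $k$-colorable (so that $F-e$ embeds in any sufficiently balanced complete $k$-partite skeleton), should close the argument.
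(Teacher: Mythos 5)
The paper does not give a proof of this theorem; it is quoted verbatim from Nikiforov's 2009 paper \cite{N09-2}, so there is no in-paper argument to compare against. That said, your two-stage scaffolding (spectral stability followed by an exact-extremal upgrade via the Perron vector) does reflect the architecture of Nikiforov's program, and the second stage is a reasonable sketch of the kind of local-swap argument that is actually used. The problem lies squarely in your derivation of Stage~1.

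The step where you propose to ``combine the edge version of Erd\H{o}s--Simonovits stability with $\lambda^2(G)\le 2e(G)$'' does not close. From $\lambda(G)\ge \lambda(T_{n,k})=\bigl(1-\tfrac1k\bigr)n+O(1)$ and $\lambda^2\le 2m$ you obtain only
\[
m \;\ge\; \frac{\lambda^2}{2}\;\ge\; \frac{(1-\tfrac1k)^2}{2}\,n^2 \;-\; O(n),
\]
whereas Erd\H{o}s--Simonovits stability requires $m\ge t_{n,k}-o(n^2)=\bigl(1-\tfrac1k\bigr)\tfrac{n^2}{2}-o(n^2)$. Since $(1-\tfrac1k)^2<1-\tfrac1k$ for every $k\ge 2$, your lower bound on $m$ falls short of the Tur\'an threshold by a factor of $1-\tfrac1k$, i.e.\ by $\Theta(n^2)$ edges (for $k=2$ you would have $m\ge n^2/8$ where $m\ge n^2/4-o(n^2)$ is needed). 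Thus the edge-stability theorem cannot be invoked, and there is no route from ``large spectral radius'' to ``near-Tur\'an edge count'' via $\lambda^2\le 2m$ alone. Nikiforov circumvents this by proving \emph{genuinely spectral} stability: he first establishes a spectral Erd\H{o}s--Stone--Bollob\'as theorem (if $\lambda(G)\ge\bigl(1-\tfrac1k+\varepsilon\bigr)n$ then $G$ contains a large blown-up $K_{k+1}$, hence $F$), which pins $\lambda(G)$ to $\bigl(1-\tfrac1k+o(1)\bigr)n$; and he then proves a spectral stability lemma directly by walk-counting / tensor-power techniques rather than by passing through $e(G)$. You would need to import one of those results (or prove an analogue) before your Stage~2 swap argument can begin.

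A secondary concern, which you flag yourself, is the no-$F$-creation check in Stage~2: with only $o(n^2)$-closeness you cannot a priori exclude that an inserted cross-edge completes a copy of $F$, because $F$ minus its colour-critical edge is $k$-colourable and therefore \emph{does} embed in a nearly balanced complete $k$-partite skeleton. The standard fix is not a greedy ordering on $x_v x_w$ but rather first cleaning the partition (removing the few ``atypical'' vertices whose neighbourhoods deviate from the $T_{n,k}$ pattern by more than $\varepsilon n$) so that any surviving bad pair, if swapped, creates a copy of $F$ \emph{only if one already existed}; this is where the colour-criticality of $F$ is genuinely used. As written, your argument gestures at this but does not supply the cleaning step that makes it rigorous.
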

Recently, Zhai and Lin \cite{ZL21} refined Nikiforov's
spectral color critical theorem for two cases that the
color-critical subgraph is a book or theta graph.

Our original motivation of this paper is to study the
phenomenon behind Mubayi's theorem from a spectral
perspective. The central problem is the following.

\begin{prob}\label{Prob:1}
(i) (The general case) Find a spectral version of Mubayi's
result.\footnote{One had better give an
estimate or a formula of spectral correspondence of $c(n,F)$.}\\
(ii) (The critical case) For $s=1$ (where $s$ is defined as in
Theorem \ref{Thm:Mubayi}), find the tight spectral versions
of Mubayi's result when $F$ is some particular color-critical
subgraph, such as triangle, clique, book, odd cycle
or odd wheel, etc.
\end{prob}

One can find that, unlike the edge version, the general case
in Problem \ref{Prob:1} cannot imply the critical case
directly. As a starting point, we mainly focus on the fundamental case of triangles
for Problem \ref{Prob:1}.

The study of eigenvalue conditions for triangles has a rich history.
Let $G$ be a simple and undirected graph with $m$ edges and $t(G)$
the number of triangles in $G$. Let $\lambda(G)$ be the spectral
radius of $G$, which is defined to be the maximum of modulus of
eigenvalues of adjacency matrix $A(G)$. The eigenvalues of $G$
are arranged as $\lambda(G):=\lambda_1(G)\ge\lambda_2(G)\ge \cdots\ge \lambda_n(G)$.
If there is no danger of ambiguity, we drop the notation $G$.
A classic result due to Nosal \cite{N70} states that $t(G)\geq 1$
if $\lambda(G)>\sqrt{m}$, which is called a spectral Mantel's theorem.
Nikiforov \cite{N09} extended Nosal's theorem to that $t(G)\geq 1$ if
$\lambda(G)\geq \sqrt{m}$ unless $G$ is a complete bipartite graph
(possibly with some isolated vertices). Strengthening a conjecture
due to Edwards and Elphick \cite{EE83}, Bollob\'as and Nikiforov \cite{BN07}
conjectured that $\lambda^2_1+\lambda^2_2\leq 2m(1-\frac1{r})$
for a $K_{r+1}$-free graph on at least $r+1$ vertices and $m$ edges,
This conjecture was confirmed by Lin, Ning and Wu \cite{LNW21} for
triangle-free graphs, i.e., the case of $r=2$. Furthermore, Lin
et al. \cite{LNW21} proved that every non-bipartite graph on $m$ edges
contains a triangle if $\lambda(G)\geq \sqrt{m-1}$ unless $G$ is
a $C_5$ (possibly together with some isolated vertices).
Only very recently, Zhai and Shu \cite{ZS21} extended it as follows:
If $\lambda(G)\geq \lambda(SK_{2,\frac{m-1}{2}})$ then $t(G)\geq 1$, unless
$G\cong SK_{2,\frac{m-1}{2}}$ where $SK_{2,\frac{m-1}{2}}$ is a subdivision
on one edge of $K_{2,\frac{m-1}{2}}$. For other extensions
of Nosal's theorem, see \cite{G96,ZL21,N21,ELW21}.

In 2007, Bollob\'as and Nikiforov \cite{BN07} proved a number of relations
between the number of cliques of a graph $G$ and $\lambda(G)$.
 The important triangle case can be written
as follows.

\begin{thm}[{\rm Bollob\'as and Nikiforov \cite[$r=2$~in~Theorem~2]{BN07}}]\label{Thm:SpecSize}
$$t(G)\geq \frac{\lambda(G)(\lambda^2(G)-m)}{3}.$$
\end{thm}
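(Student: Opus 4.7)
My plan hinges on the two classical trace identities
$$6\, t(G) \;=\; \operatorname{tr}(A(G)^{3}) \;=\; \sum_{i=1}^{n}\lambda_i^{3}, \qquad \sum_{i=1}^{n}\lambda_i^{2} \;=\; \operatorname{tr}(A(G)^{2}) \;=\; 2m.$$
Writing $\lambda=\lambda_1(G)$ and isolating the Perron contribution, the first identity becomes
$$6\,t(G) \;=\; \lambda^{3} + \sum_{i=2}^{n}\lambda_i^{3}.$$

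Next I would bound the tail sum from below by a single pointwise inequality. Since $\lambda_i\geq -\lambda$ for every $i$, multiplying by the nonnegative quantity $\lambda_i^{2}$ gives $\lambda_i^{3}\geq -\lambda\,\lambda_i^{2}$. Summing over $i\geq 2$ and using $\sum_{i=2}^{n}\lambda_i^{2}=2m-\lambda^{2}$, I obtain
$$\sum_{i=2}^{n}\lambda_i^{3} \;\geq\; -\lambda\bigl(2m-\lambda^{2}\bigr),$$
so $6\,t(G)\geq \lambda^{3}-\lambda(2m-\lambda^{2})=2\lambda(\lambda^{2}-m)$, which is the claimed inequality. (Note the bound is vacuous when $\lambda^{2}\leq m$; the content lies in the regime where Nosal's theorem already forces a triangle.)

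For the extremal characterization, equality throughout forces $\lambda_i^{3}=-\lambda\,\lambda_i^{2}$ for every $i\geq 2$, i.e.\ each non-Perron eigenvalue lies in $\{0,-\lambda\}$. Combined with $\sum_i \lambda_i = \operatorname{tr}(A(G))=0$, exactly one eigenvalue equals $-\lambda$ and the remaining $n-2$ equal $0$. Hence $A(G)$ has rank at most $2$, and a standard argument identifies the graphs with rank-$2$ adjacency matrix as complete bipartite graphs together with possibly some isolated vertices; conversely for $K_{a,b}$ one has $\lambda=\sqrt{ab}$, $m=ab$ and $t(G)=0$, so equality is attained.

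I do not anticipate a serious obstacle: the core inequality reduces to one line once the trace identities are in place, and the only mildly delicate ingredient is the rank-$2$ classification used to pin down the extremal graphs, which is folklore but should be written out cleanly. The single calculation that does the real work, $\lambda_i^{3}\geq -\lambda\,\lambda_i^{2}$, also makes the equality analysis transparent, so the two halves of the proof can be presented in parallel.
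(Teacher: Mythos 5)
Your proof of the inequality itself follows essentially the same route as the paper: you split $6t(G)=\lambda^3+\sum_{i\geq 2}\lambda_i^3$ and bound each $\lambda_i^3\geq -\lambda\lambda_i^2$, which is exactly the statement $(\lambda_1+\lambda_i)\lambda_i^2\geq 0$ that underlies the paper's identity (their equation (8)) and the subsequent one-line deduction of (9). The equality analysis, however, is genuinely different, and arguably tidier. The paper first reduces to a connected component carrying the spectral radius, then applies two textbook lemmas (only three distinct eigenvalues forces diameter at most $2$; $\lambda_n=-\lambda_1$ forces bipartiteness) and concludes complete bipartite; the disconnected case is handled by an explicit reduction. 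You instead observe that equality forces the spectrum to be $\{\lambda,0,\dots,0,-\lambda\}$, so $A(G)$ has rank $2$, and invoke the classification of rank-$2$ adjacency matrices. That one global step replaces the paper's component-by-component argument, which is a real simplification, but it hides the only nontrivial content in the ``standard argument'' you defer: you should either cite the rank-$2$ classification (equivalently, the characterization of graphs with exactly one positive eigenvalue as complete multipartite graphs plus isolated vertices, then use one negative eigenvalue to force bipartiteness) or write out a short self-contained proof. Two small points to tighten: your deduction ``exactly one eigenvalue equals $-\lambda$'' presumes $\lambda>0$, so note separately that $\lambda=0$ means the graph is edgeless and equality holds trivially; and since the rank-$2$ matrices form a connected block plus zeros, your classification automatically handles disconnected graphs, which is worth remarking so the reader sees no case is missed.
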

\begin{thm}[{\rm Bollob\'as and Nikiforov \cite[$r=2$~in~Theorem~1]{BN07}}]\label{Thm:Specorder}
$$t(G)\geq \frac{n^2}{12}\cdot \left(\lambda(G)-\frac{n}{2}\right).$$
\end{thm}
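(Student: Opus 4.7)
The plan is to deduce this order-based inequality directly from Theorem \ref{Thm:SpecSize}, the size-based version that has just been established. The bridge between the two bounds is the standard Rayleigh-quotient estimate $\lambda(G)\geq 2m/n$, which lets one trade the number of edges $m$ for a quantity involving $n$ and $\lambda:=\lambda(G)$.

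First I would dispose of the range $\lambda<n/2$: the right-hand side of the stated inequality is then non-positive, and $t(G)\geq 0$ already gives the result. Assume henceforth $\lambda\geq n/2$. Applying the Rayleigh quotient to the all-ones vector $\mathbf{1}$ gives $\lambda\geq \mathbf{1}^{T}A(G)\mathbf{1}/\mathbf{1}^{T}\mathbf{1}=2m/n$, i.e.\ $m\leq \lambda n/2$. Substituting this into Theorem \ref{Thm:SpecSize} yields
$$t(G)\geq \frac{\lambda(\lambda^{2}-m)}{3}\geq \frac{\lambda\bigl(\lambda^{2}-\tfrac{\lambda n}{2}\bigr)}{3}=\frac{\lambda^{2}}{3}\left(\lambda-\frac{n}{2}\right).$$
Since the assumption $\lambda\geq n/2$ makes $\lambda-n/2$ non-negative and $\lambda^{2}\geq n^{2}/4$, one concludes
$$t(G)\geq \frac{n^{2}}{12}\left(\lambda-\frac{n}{2}\right),$$
as required.

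There is no serious obstacle in this argument: the combinatorial content is entirely absorbed into Theorem \ref{Thm:SpecSize}, and what remains are two elementary monotone estimates. The one point requiring attention is recognising that $\lambda^{2}-m$ factors cleanly as $\lambda(\lambda-n/2)$ after the average-degree substitution, which is precisely what produces the desired $\lambda-n/2$ term and simultaneously the $n^{2}/12$ constant. A quick check with the balanced complete bipartite graph $K_{\lfloor n/2\rfloor,\lceil n/2\rceil}$, for which $\lambda\approx n/2$ and $t(G)=0$, shows that both inequalities used above are near-equalities on this family, so the bound is essentially tight.
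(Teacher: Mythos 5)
Your proof is correct, but note that the paper does not actually prove Theorem~\ref{Thm:Specorder} --- it is cited from Bollob\'as and Nikiforov, and Section~2 supplies proofs only for Theorems~\ref{Thm:SpecSize} and~\ref{Thm:spectraltriangle:counting-size}. Your argument is therefore not a variant of the paper's proof but an independent derivation, and a tidy one: you reduce the order-based bound to the size-based one via the elementary average-degree estimate $\lambda\geq 2m/n$, disposing of the range $\lambda<n/2$ trivially. Both substitutions you make point in the correct direction --- replacing $m$ by $\lambda n/2$ inside $\lambda^2-m$ (valid since $m\leq \lambda n/2$), and then replacing $\lambda^2$ by $n^2/4$ once $\lambda\geq n/2$ so that the prefactor only shrinks while $\lambda-n/2\geq 0$ --- and the factoring $\lambda^2-\tfrac{\lambda n}{2}=\lambda(\lambda-\tfrac n2)$ produces the constant $n^2/12$ exactly. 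Your sanity check on $T_{n,2}$ is also apt, since all three inequalities become equalities there. In short, your one-line bridge shows that Theorem~\ref{Thm:Specorder} is a formal consequence of Theorem~\ref{Thm:SpecSize} plus the Rayleigh quotient at the all-ones vector; combined with the paper's short new proof of Theorem~\ref{Thm:SpecSize} via identity~(\ref{8}), this yields a self-contained short proof of Theorem~\ref{Thm:Specorder} that the authors could have recorded.
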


Theorems \ref{Thm:SpecSize} and \ref{Thm:Specorder} provide
two powerful results on counting triangles.
A combination of these two inequalities can be seen as a
partial solution to Problem 1 (i) for the case of $F=C_3$.
Motivated by the previous work \cite{BN07,N09-2,M10,ZL21},
we concern the similar problem on counting triangles
when $\lambda(G)$ is close to $\sqrt{m}$ or $\frac n2.$
More precisely,
we shall give a solution to Problem 1 (ii) when $F=C_3$.

Indeed, we improve Nosal's theorem by a sharp
counting result on $t(G)$ as follows.

\begin{thm}\label{Thm:spectraltriangle:counting-size}
Let $G$ be a graph with $m$ edges. If $\lambda(G)\ge\sqrt{m}$ then
$t(G)\ge \lfloor\frac{\sqrt{m}-1}2\rfloor,$
unless $G$ is a complete bipartite graph (possibly with isolated vertices).
\end{thm}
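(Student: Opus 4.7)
The plan is to argue by induction on $m$. The target $\lfloor(\sqrt{m}-1)/2\rfloor$ equals $0$ for $m\le 8$, so the bound is vacuous in that range, and equals $1$ for $9\le m\le 24$, which is immediate from Nikiforov's extension of Nosal's theorem mentioned in the introduction. For the inductive step, assume the statement for all graphs with fewer than $m$ edges. Given $G$ with $\lambda(G)\ge\sqrt{m}$ not complete bipartite, Nikiforov's theorem furnishes a triangle; let $uv$ be one of its edges and set $G':=G-uv$. Since every triangle of $G$ not through $uv$ survives in $G'$, we have
\[
t(G)\ \ge\ t(G')+|N_G(u)\cap N_G(v)|\ \ge\ t(G')+1.
\]
If $G'$ inherits the hypotheses, i.e., $\lambda(G')\ge\sqrt{m-1}$ and $G'$ is not complete bipartite, then induction gives $t(G')\ge\lfloor(\sqrt{m-1}-1)/2\rfloor$; since $(\sqrt{m}-\sqrt{m-1})/2<1/2$, the two floors differ by at most one, so $t(G)\ge\lfloor(\sqrt{m}-1)/2\rfloor$ and the induction closes.

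The crux is to choose $uv$ so that either this inductive scenario holds or else $G'$ is complete bipartite (possibly with isolated vertices). Let $x$ be the positive Perron eigenvector of $G$ with $x^\top x=1$. The Rayleigh bound
\[
\lambda(G')\ \ge\ \lambda(G)-2x_ux_v\ \ge\ \sqrt{m}-2x_ux_v
\]
reduces the inductive branch to exhibiting a triangle edge $uv$ with $x_ux_v\le\frac{1}{2(\sqrt{m}+\sqrt{m-1})}$. Producing such an edge is the principal obstacle: the naive averaging via $\sum_{ab\in E}2x_ax_b=\lambda(G)$ is too weak when $\lambda(G)$ is close to $\sqrt{m}$. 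My strategy is to proceed by contradiction, combining this identity with the codegree expansion
\[
\lambda^2(G)=\sum_a d(a)\,x_a^2+2\sum_{a<b}|N(a)\cap N(b)|\,x_ax_b
\]
to show that if no such triangle edge exists then $\lambda^2(G)-m$ is bounded below by a positive quantity sufficient for Theorem~\ref{Thm:SpecSize} to output $t(G)\ge\lfloor(\sqrt{m}-1)/2\rfloor$ directly.

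In the complete-bipartite fallback $G'=K_{s,t}$ (plus isolated vertices), the vertices $u,v$ must lie in a common part. Writing the Perron vector of the explicit graph $G=K_{s,t}+uv$ and solving its characteristic equation translates $\lambda(G)\ge\sqrt{st+1}$ into the clean inequality $s\le 4t+4$ when $u,v$ lie in the part of size $s$. Hence $\sqrt{m}=\sqrt{st+1}\le 2t+1$, while the $t$ vertices of the opposite part are already common neighbours of $u,v$ in $G$, so $t(G)\ge t\ge\lfloor(\sqrt{m}-1)/2\rfloor$, completing the argument (the symmetric orientation is handled identically, and also pins down the extremal configuration).
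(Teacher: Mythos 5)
Your proposal takes a genuinely different route from the paper: you attempt an induction on $m$ by deleting a single triangle edge and controlling the drop in spectral radius via the Rayleigh quotient, whereas the paper works with a fixed graph, analyzes the structure of the first and second neighborhoods of the vertex carrying the largest Perron entry, and combines the identity (\ref{8}) with Cauchy interlacing against a short list of forbidden induced subgraphs. Your base case ($m\le 24$ via Nikiforov's refinement of Nosal) and the floor bookkeeping $\lfloor(\sqrt{m-1}-1)/2\rfloor+1\ge\lfloor(\sqrt{m}-1)/2\rfloor$ are correct, and the complete-bipartite fallback is sound: for $G=K_{a,b}^+$ with the edge added in the part of size $b$, the characteristic polynomial of the equitable quotient gives $p(\sqrt{m})=\sqrt{m}-1-2a$, so $\lambda\ge\sqrt{m}$ forces $\sqrt{m}\le 2a+1$ and hence $t(G)=a\ge\lfloor(\sqrt{m}-1)/2\rfloor$, as you claim.

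However, the step you yourself flag as ``the principal obstacle'' is where the proof actually breaks. You need, for every non-complete-bipartite $G$ with $\lambda(G)\ge\sqrt{m}$, a triangle edge $uv$ whose deletion either keeps $\lambda\ge\sqrt{m-1}$ or produces a complete bipartite graph, and your only device for the former is the estimate $\lambda(G-uv)\ge\lambda(G)-2x_ux_v$, which requires $x_ux_v\le\frac{1}{2(\sqrt{m}+\sqrt{m-1})}\approx\frac{1}{4\sqrt{m}}$. You do not produce such an edge; you only state an intention to derive a contradiction from its nonexistence using the codegree expansion of $\lambda^2$ and Theorem~\ref{Thm:SpecSize}. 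That plan cannot be right as stated: take $G=K_{a,4a+4}^+$, where $\lambda(G)=\sqrt{m}$ exactly. A direct computation of the Perron vector (the triangle-edge entries are $z=y/2$ with $y$ the entry on the opposite part and $\|x\|=1$) gives $x_{u_1}x_{u_2}=z^2=\frac{2a+1}{2(8a^2+4a+1)}\sim\frac{1}{8a}$, while your threshold is $\sim\frac{1}{8a+4}$, which is strictly smaller. So \emph{every} triangle edge violates the low-weight criterion, yet $\lambda^2-m=0$ and Theorem~\ref{Thm:SpecSize} yields only $t(G)\ge 0$. Thus ``no low-weight triangle edge'' does \emph{not} force $\lambda^2-m$ to be usefully large, and your contradiction argument fails on exactly the family that makes the theorem sharp. (In this example one is saved by the fallback since deleting $u_1u_2$ leaves a complete bipartite graph, but your case decomposition routes the ``no low-weight edge'' case to the Theorem~\ref{Thm:SpecSize} branch, not the fallback.) To repair this you would need to show that whenever no low-weight triangle edge exists and no single triangle-edge deletion produces a complete bipartite graph, then $\lambda^2-m\gtrsim 3/2$; nothing in the proposal addresses that refined case, and it is not clear it is true. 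As it stands, the central inductive step is unsupported, so the argument is incomplete.
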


\vspace{2mm}
\noindent
\textbf{Remark.}
Let $b\le 4(a+1)$, $a\ge 1$ and
$K_{a,b}^+$ be the graph obtained from $K_{a,b}$ by
adding an edge to the color set of size $b$.
Then $e(K_{a,b}^+)=ab+1$ and $t(K_{a,b}^+)=a$.
It is easy to check $\lambda(K_{a,b}^+)\ge \sqrt{e(K_{a,b}^+)}$ for $b\le 4(a+1)$.
Obviously, we have $e(K_{a,b}^+)=ab+1\le 4a(a+1)+1,$
and so $t(K_{a,b}^+)=a\ge \frac12\left(\sqrt{e(K_{a,b}^+)}-1\right)$.
This implies that the lower bound in Theorem \ref{Thm:spectraltriangle:counting-size} is best possible.

With the method similar to the one proving Theorem \ref{Thm:spectraltriangle:counting-size},
we shall present a short new proof of Theorem \ref{Thm:SpecSize}.
Very recently, Theorem \ref{Thm:SpecSize} was further improved by
Nikiforov \cite{N21} to $\lambda^3-\lambda\cdot m+c\lambda\cdot t^{''}\leq 3t$
for connected non-bipartite graphs, where
$t^{''}:=\sum_{u\in V(G)}t^{''}(u)=\sum_{u\in V(G)}|\{vw\in E(G): u\in\overline{N}(v)\cap \overline{N}(w)\}|$,
as a powerful tool to solve an open problem by Zhai et al.
(see \cite[Conjecture~5.2]{ZLS21}). The original inequality
(Theorem \ref{Thm:SpecSize}) is also used as a tool for
obtaining a spectral version of extremal number of friendship
graphs \cite{EFGG95}.

Moreover, we also present
a strengthening of Nikiforov's spectral color critical
theorem on triangles (see Theorem \ref{nikicolor}),
and this result can be viewed as a spectral version
of Rademacher's theorem.

\begin{thm}\label{spectraltriangle:counting-order}
Let $G$ be a graph on $n$ vertices. If $\lambda(G)\ge
\sqrt{\lfloor\frac{n^2}4\rfloor}$, then $t(G)\ge \lfloor\frac{n}{2}\rfloor-1$ unless
$G$ is the bipartite Tur\'{a}n graph $T_{n,2}$.
\end{thm}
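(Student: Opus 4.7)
The plan is to split by the size $m=e(G)$ relative to the Mantel bound $t_{n,2}=\lfloor n^2/4\rfloor$. If $m\ge t_{n,2}+1$, Rademacher's theorem (recalled in the introduction) gives $t(G)\ge \lfloor n/2\rfloor$ directly, which is stronger than what is claimed.

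If instead $m\le t_{n,2}$, the hypothesis $\lambda(G)\ge \sqrt{t_{n,2}}$ implies $\lambda^2-m\ge t_{n,2}-m\ge 0$, so Theorem \ref{Thm:SpecSize} applies productively. Combined with the elementary bound $\sqrt{t_{n,2}}\ge (n-1)/2$ (easily checked in both parities of $n$, since $n^2-1\ge(n-1)^2$), we obtain
$$t(G)\ge \frac{\lambda(\lambda^2-m)}{3}\ge \frac{\sqrt{t_{n,2}}\,(t_{n,2}-m)}{3}\ge \frac{(n-1)(t_{n,2}-m)}{6}.$$
Whenever $t_{n,2}-m\ge 3$, the right-hand side is at least $(n-1)/2\ge \lfloor n/2\rfloor-1$, and this range of $m$ is settled.

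What remains is the critical boundary $t_{n,2}-m\in\{0,1,2\}$ with $G\ne T_{n,2}$. Here I would use a stability argument anchored at $T_{n,2}$. First, Nikiforov's spectral color-critical theorem (Theorem \ref{nikicolor}) applied to $F=K_3$ already forces $G$ to contain a triangle, since $\lambda(G)\ge \sqrt{t_{n,2}}=\lambda(T_{n,2})$ and $G\ne T_{n,2}$. Second, because both $m$ and $\lambda$ lie within $O(1)$ of the extremal values for a triangle-free graph, an Erd\H{o}s--Simonovits type stability argument pins $G$ down as a small perturbation of $T_{n,2}$, obtained by a bounded number of edge swaps. A direct count then shows that each edge placed inside a part of $T_{n,2}$ contributes roughly $\lceil n/2\rceil$ triangles through it, outweighing the few triangles destroyed by removed cross-edges, so the total in $G$ exceeds $\lfloor n/2\rfloor-1$.

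The main obstacle is the boundary analysis. The bound given by Theorem \ref{Thm:SpecSize} becomes insensitive when $t_{n,2}-m\in\{0,1,2\}$, and Theorem \ref{Thm:Specorder} does not substitute for it because $\sqrt{t_{n,2}}$ is strictly smaller than $n/2$ when $n$ is odd, making its right-hand side non-positive. Pushing the proof through therefore amounts to a sharp perturbative refinement of Nikiforov's spectral color-critical theorem in a neighborhood of $T_{n,2}$, and this is where I expect the technical core of the argument to lie.
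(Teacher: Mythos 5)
Your reduction does cleanly dispose of most of the range of $m$: Rademacher handles $m\ge t_{n,2}+1$, and the Bollob\'as--Nikiforov bound (Theorem~\ref{Thm:SpecSize}) handles $m\le t_{n,2}-3$, so the residual case is indeed $t_{n,2}-m\in\{0,1,2\}$ with $G\ne T_{n,2}$. However, what you offer for that residual case is not a proof but a description of the difficulty, and the gap is real. First, Theorem~\ref{nikicolor} only yields the \emph{existence} of a triangle (and only for $n$ large enough), whereas you need $\lfloor n/2\rfloor-1$ of them. Second, the appeal to ``an Erd\H{o}s--Simonovits type stability argument'' pinning $G$ down as ``a bounded number of edge swaps'' from $T_{n,2}$ is not justified: classical stability gives closeness in an $o(n^2)$-edge sense, which is far too coarse; extracting an $O(1)$-edge structural statement in the regime $\lambda^2-m=O(1)$ is itself a sharp spectral-structural theorem that would have to be proved, and no candidate lemma with that strength is cited or sketched. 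Third, even granting the structure, the final count (``each edge placed inside a part contributes roughly $\lceil n/2\rceil$ triangles, outweighing the few triangles destroyed'') is not made precise; one must rule out configurations where the removed cross-edges concentrate on the endpoints of the new internal edges.

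For contrast, the paper's proof does not split on $m$ and does not invoke stability at all. It takes a counterexample with $\lambda$ maximum, hence connected, fixes the Perron-maximal vertex $u^*$, sets $A=N_G(u^*)$ and $B=V(G)\setminus N_G[u^*]$, and then, through a chain of claims driven by Perron-vector inequalities, successively constrains the structure: $|A|\ge\lceil n/2\rceil$; $b_0=0$ (no vertex of $B$ dominates $A$); $G[A]\cong K_{1,t^*}\cup(|A|-t^*-1)K_1$; $B$ is independent; and the star-center $u_0$ has $d_B(u_0)=0$. Each of these is established by showing that otherwise $\lambda^2<\lfloor n^2/4\rfloor$. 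A final two-case computation on $|A|$ closes the argument. This direct spectral route sidesteps exactly the perturbative core that your proposal leaves open.
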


\noindent
\textbf{Remark.} When $n$ is even, we denote
$K_{\frac{n}{2}+1,\frac{n}{2}-1}^+$ by the graph obtained from
$K_{\frac{n}{2}+1,\frac{n}{2}-1}$ by adding an edge to the colorable
set of size $\frac{n}{2}+1$. One can verify
$\lambda(K_{\frac{n}{2}+1,\frac{n}{2}-1}^+)>\lambda(T_{n,2})=\frac{n}{2}$,
and the number of triangles in $K_{\frac{n}{2}+1,\frac{n}{2}-1}^+$
is exactly $\frac{n}{2}-1$. This implies that the lower bound in
Theorem \ref{spectraltriangle:counting-order} is best possible.

Some notations involved in this paper are introduced.
Let $G$ be a graph with vertex set $V(G)$ and edge set $E(G)$.
For a vertex $v\in V(G)$ (whether $v\in S$ or not),
let $N_G(v)$ (resp. $N_S(v)$) be the set of neighbors in $G$ (resp. in $S$),
and $d_G(v)=|N_G(v)|$ (resp. $d_S(v)=|N_S(v)|$). Specially, set $N_G[v]=N_G(v)\cup \{v\}$.
Let $G[S]$ be the subgraph of $G$ induced by $S$.

In the next section, we present proofs of Theorems \ref{Thm:SpecSize}
and \ref{Thm:spectraltriangle:counting-size}. We give a proof of
Theorem \ref{spectraltriangle:counting-order} in
Section \ref{Sec:3}.
We conclude this paper with some open problems in the
last section.

\section{Proofs of Theorems \ref{Thm:SpecSize} and \ref{Thm:spectraltriangle:counting-size}}\label{Sec:2}

In this section, we first write Bollob\'{a}s-Nikiforov inequality (Theorem \ref{Thm:SpecSize})
in a compact form (Theorem \ref{Thm:trianlgeequality}), whose proof uses two lemmas.
\begin{thm}\label{Thm:trianlgeequality}
Let $G$ be a graph on $n$ vertices and $m$ edges.
Let $\lambda_1\geq \lambda_2\geq\cdots\geq \lambda_n$
be all eigenvalues of $G$. Then
\begin{align}
t(G)&=\frac{1}{6}\sum_{i=2}^n(\lambda_1+\lambda_i)\lambda^2_i+\frac{\lambda_1(\lambda_1^2-m)}{3}\label{8}\\\
&\geq \frac{\lambda(G)(\lambda^2(G)-m)}{3}\label{9}.
\end{align}
In particular, Eq. in (\ref{9}) holds if and only if $G$
is a complete bipartite graph (possibly with some isolated vertices).
\end{thm}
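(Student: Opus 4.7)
The plan is to exploit the three standard trace identities
\[\sum_{i=1}^n \lambda_i = 0, \qquad \sum_{i=1}^n \lambda_i^2 = 2m, \qquad \sum_{i=1}^n \lambda_i^3 = 6t(G),\]
obtained from $\mathrm{tr}(A(G)^k)$ for $k=1,2,3$ by counting closed walks. First I would derive the identity (\ref{8}) by direct manipulation: writing $6t(G) = \lambda_1^3 + \sum_{i=2}^n \lambda_i^3$ and using $\sum_{i=2}^n\lambda_i^2 = 2m-\lambda_1^2$, one rearranges the tail as $\sum_{i=2}^n(\lambda_1+\lambda_i)\lambda_i^2 - 2\lambda_1(\lambda_1^2-m)$, from which (\ref{8}) follows after dividing by $6$.

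For inequality (\ref{9}), I would note that $\lambda_1 = \lambda(G)$ is the spectral radius, so $|\lambda_i|\leq\lambda_1$ and hence $\lambda_1+\lambda_i\geq 0$ for every $i\geq 2$. Combined with $\lambda_i^2\geq 0$, every summand in (\ref{8}) is non-negative, which immediately gives $t(G)\geq \frac{\lambda_1(\lambda_1^2-m)}{3}$. This part is essentially a one-line deduction once (\ref{8}) is in hand.

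Equality in (\ref{9}) forces $(\lambda_1+\lambda_i)\lambda_i^2 = 0$ for all $i\geq 2$, so each such $\lambda_i\in\{0,-\lambda_1\}$; in particular $\lambda_1$ is simple, and the identity $\sum_i\lambda_i=0$ then pins the multiplicity of $-\lambda_1$ at exactly one. Hence the spectrum of $G$ must be $\{\lambda_1,\,0^{(n-2)},\,-\lambda_1\}$. The remaining step, which is the main obstacle, is to show that a graph with such a spectrum is a complete bipartite graph together with isolated vertices. My plan is to write the spectral decomposition $A = \lambda_1(xx^\top - yy^\top)$ with orthonormal eigenvectors $x,y$ for $\pm\lambda_1$, observe that the zero diagonal $A_{ii}=0$ forces $x_i^2=y_i^2$ at every vertex, and partition the support of $x$ (which may be taken non-negative by Perron--Frobenius) according to whether $y_i=x_i$ or $y_i=-x_i$. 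The constraint $A_{ij}\in\{0,1\}$ then forces every within-class pair to be non-adjacent and every cross-pair to be adjacent, realising the non-isolated part of $G$ as a single $K_{a,b}$; the simplicity of $\lambda_1$ (already established) precludes two or more complete bipartite components. The hard part is extracting this rigid $0/1$ structure from the two scalar constraints, and this is exactly where a rank-$2$ characterization lemma for adjacency matrices is needed.
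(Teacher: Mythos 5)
Your derivations of identity~(\ref{8}) and inequality~(\ref{9}) match the paper's: rearrange the three power-sum identities and observe that $\lambda_1+\lambda_i\ge0$ for $i\ge2$ since $\lambda_1$ is the spectral radius. The interesting divergence is in the equality characterization. The paper first reduces to the connected case, deduces $\lambda_n=-\lambda_1$ from the fact that the nonzero $\lambda_i$ ($i\ge2$) must all equal $-\lambda_1$, invokes the classical fact that a connected graph is bipartite iff $\lambda_n=-\lambda_1$, and then uses the lemma that a connected graph of diameter $d$ has at least $d+1$ distinct eigenvalues to force diameter $\le2$, hence complete bipartite; the disconnected case is then dispatched by passing to the dominant component. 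You instead go straight to the rank-two spectral decomposition $A=\lambda_1(xx^\top-yy^\top)$ and read the structure off the $0/1$ entries: the zero diagonal gives $|y_i|=|x_i|$, so the support of $x$ (positive on the dominant component) splits into $S_1=\{y_i=x_i\}$ and $S_2=\{y_i=-x_i\}$; within each class $A_{ij}=0$, across classes $A_{ij}=2\lambda_1 x_i x_j\ne0$ so $A_{ij}=1$; and vertices with $x_i=0$ have $y_i=0$ and hence are isolated. This is a complete and correct argument, and it has the virtue of treating the connected and disconnected cases at once and of not invoking the diameter lemma or the bipartiteness criterion. One small thing to make explicit: you asserted $\lambda_1$ is simple, which does follow from the equality condition provided $\lambda_1>0$ (i.e.\ $m\ge1$); the degenerate $m=0$ case is immediate. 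Also, the "rank-$2$ characterization lemma" you flagged as the hard part is in fact exactly the computation you sketched in the preceding sentences, so nothing further is missing.
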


\begin{lem}[{\rm {Theorem 3.13 in \cite[pp.~88]{CDS80}}}]\label{Lem:diameter}
Let $G$ be a connected graph. If the diameter of $G$ is $d$, then
$G$ contains at least $d+1$ distinct eigenvalues.
\end{lem}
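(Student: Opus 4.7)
The plan is to exploit the combinatorial meaning of powers of the adjacency matrix $A=A(G)$, together with the fact that, since $A$ is real symmetric, the number of distinct eigenvalues of $A$ equals the degree of its minimal polynomial.

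First, I would fix a pair of vertices $u,v\in V(G)$ with $d_G(u,v)=d$; such a pair exists by the definition of diameter. Recall that for any nonnegative integer $k$, the entry $(A^k)_{uv}$ counts the number of walks of length exactly $k$ from $u$ to $v$. Consequently $(A^k)_{uv}=0$ for every $0\le k\le d-1$ (including $k=0$, since $u\ne v$), while $(A^d)_{uv}>0$ because at least one shortest $u$--$v$ path of length $d$ exists by the choice of $u,v$ in a connected graph.

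Next, suppose for contradiction that $G$ has at most $d$ distinct eigenvalues. Since $A$ is real symmetric, it is diagonalizable over $\mathbb{R}$, so its minimal polynomial $m_A(x)$ has only simple roots, namely the distinct eigenvalues of $A$. Hence $\deg(m_A)\le d$, which forces $A^d$ to lie in the span of $I,A,A^2,\ldots,A^{d-1}$; that is, one may write
\[
A^{d}=\sum_{k=0}^{d-1}c_k A^{k}
\]
for some scalars $c_0,\ldots,c_{d-1}\in\mathbb{R}$. Reading off the $(u,v)$-entry of both sides, the right-hand side vanishes by the walk-counting remark in the previous paragraph, while the left-hand side is strictly positive. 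This contradiction forces $G$ to have at least $d+1$ distinct eigenvalues, as claimed.

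The whole argument is short, so there is no real obstacle; the only delicate point worth flagging explicitly is the equivalence between the number of distinct eigenvalues and the degree of the minimal polynomial, which relies on diagonalisability of $A$ and hence fails for general (non-normal) matrices. A single sentence invoking the spectral theorem for real symmetric matrices takes care of this subtlety, after which the proof reduces to the walk-counting computation above.
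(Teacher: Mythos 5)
Your proof is correct and complete: the reduction to the degree of the minimal polynomial (valid because $A$ is real symmetric, hence diagonalizable) combined with the walk-counting interpretation of $(A^k)_{uv}$ is exactly the standard argument for this fact. The paper itself gives no proof, citing the result from \cite{CDS80}, where essentially this same argument appears, so there is nothing to reconcile.
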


\begin{lem}[{\rm {Theorem 3.4 in \cite[pp.~82]{CDS80}}}]\label{Lem:bipartite}
Let $G$ be a connected graph. Then $G$ is bipartite if and only $\lambda_n=-\lambda_1$.
\end{lem}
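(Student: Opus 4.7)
The plan is to prove both implications separately. The forward implication is purely algebraic: if $G$ is bipartite with parts $A$ and $B$, ordering the vertices as $A$ followed by $B$ puts the adjacency matrix into block form
\[
A(G) = \begin{pmatrix} 0 & C \\ C^T & 0 \end{pmatrix}.
\]
A direct check shows that flipping the sign of the $B$-block of any $\mu$-eigenvector yields a $(-\mu)$-eigenvector. Hence the spectrum is symmetric about $0$; since $\lambda_1$ is largest, $-\lambda_1$ is smallest, i.e.\ $\lambda_n=-\lambda_1$.

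The reverse implication is where the work lies. Assume $\lambda_n=-\lambda_1$ and $G$ is connected; the plan is to read a $2$-colouring off the sign pattern of a $\lambda_n$-eigenvector $y$. By the Perron--Frobenius theorem applied to the nonnegative, irreducible matrix $A(G)$, the top eigenvalue $\lambda_1$ is simple and admits a strictly positive eigenvector $x>0$. From $A(G)y=-\lambda_1 y$ I obtain the componentwise estimate
\[
\lambda_1 |y_v| \;=\; \Bigl|\sum_{u\sim v} y_u\Bigr| \;\leq\; \sum_{u\sim v} |y_u| \;=\; \bigl(A(G)|y|\bigr)_v,
\]
so $A(G)|y|\ge \lambda_1|y|$ entrywise. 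Pairing this inequality with the positive vector $x$ and using symmetry of $A(G)$ forces equality in every coordinate, and simplicity of $\lambda_1$ then forces $|y|$ to be a positive multiple of $x$; in particular $y_v\neq 0$ at every vertex.

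With $|y|$ nowhere vanishing, the triangle inequality above must be tight at each $v$, so all neighbours of $v$ share a common sign $\varepsilon_v$, and comparing with $\sum_{u\sim v}y_u=-\lambda_1 y_v$ pins down $\varepsilon_v=-\mathrm{sign}(y_v)$. Consequently the partition of $V(G)$ into $\{v:y_v>0\}$ and $\{v:y_v<0\}$ contains no edge within either class, so $G$ is bipartite. The main obstacle is the transition from the spectral identity to a combinatorial bipartition: a Rayleigh-quotient argument alone only yields $A(G)|y|\ge \lambda_1|y|$ in an averaged sense, and one genuinely needs the simplicity of $\lambda_1$ (not merely its maximality) to upgrade this to pointwise equality, which in turn is what rigidifies the sign behaviour on every single edge.
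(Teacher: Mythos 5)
Your proof is correct. Note that the paper does not prove this lemma at all --- it is quoted verbatim from Cvetkovi\'c--Doob--Sachs (Theorem 3.4 there) and used as a black box --- so there is no in-paper argument to compare against; what you have written is essentially the standard textbook proof. Both directions are sound: the block-form sign flip for the forward implication, and the Perron--Frobenius rigidity argument for the converse. One small remark on your closing comment: the upgrade from $A(G)|y|\ge\lambda_1|y|$ to pointwise equality does not actually require simplicity of $\lambda_1$, only the existence of a strictly positive $\lambda_1$-eigenvector $x$ (pairing the residual $A(G)|y|-\lambda_1|y|\ge 0$ against $x>0$ gives zero, hence each coordinate vanishes). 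Simplicity --- or, alternatively, a direct connectivity argument showing that a nonnegative $\lambda_1$-eigenvector of an irreducible nonnegative matrix cannot vanish anywhere --- is what you need for the subsequent step, namely that $|y|$ is nowhere zero so that the sign pattern of $y$ yields a genuine $2$-colouring.
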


\noindent
{\bf Proof of Theorem \ref{Thm:trianlgeequality}.}
Set $\lambda:=\lambda_1=\sqrt{m+\delta}.$
Recall that
$2m=\sum_{i=1}^n \lambda^2_i.$
From two equalities above, we have
$\lambda^2_1=\sum_{i=2}^n\lambda^2_i+2\delta.$
Furthermore, we have
\begin{align}\nonumber
t(G)&=\frac{1}{6}\left(\lambda^3_1+\lambda^3_2+\ldots+\lambda^3_n\right)
=\frac{1}{6}\left(\lambda_1(\sum_{i=2}^n\lambda^2_i+2\delta)+\lambda^3_2+\ldots+\lambda^3_n\right)\\
&=\frac{1}{6}\sum_{i=2}^n\lambda^2_i(\lambda_1+\lambda_i)+\frac{1}{3}\lambda_1(\lambda^2_1-m).\nonumber
\end{align}
This proves (\ref{8}).

We shall use (\ref{8}) together with
some arguments from \cite{N17} to give a direct
and short proof of (\ref{9}).
Obviously, if $\lambda(G)<\sqrt{m}$ then there is nothing to prove.
Assume $\lambda(G)\geq\sqrt{m}$.

First suppose that $G$ is connected. By Perron-Frobenius Theorem, $\lambda_1+\lambda_i\geq 0$
holds for any integer $i\in [2,n]$. It follows that $\sum_{i=2}^n\lambda^2_i(\lambda_1+\lambda_i)\geq 0$.
From (\ref{8}), we infer that
$t(G)\geq \frac{1}{3}\lambda_1(\lambda^2_1-m).$
Furthermore, if $t(G)=\frac{1}{3}\lambda_1(\lambda^2_1-m)$ then
\begin{align}\label{ali:2ton}
\sum_{i=2}^n\lambda^2_i(\lambda_1+\lambda_i)=0.
\end{align}
From the trace formulae $\sum_{i=1}^n\lambda_i=0$ and
(\ref{ali:2ton}), we can see there exists a maximum integer
$j\in [2,n]$ such that $\lambda_j=-\lambda_1$.
Since $|\lambda_i|\leq\lambda_1$ holds
for all integers $i\in [2,n]$, $j=n$. Thus $\lambda_n=-\lambda_1$.

By Lemma \ref{Lem:bipartite}, $G$ is bipartite. From (\ref{ali:2ton})
and the fact that $\lambda_2<\lambda_1$,
all eigenvalues are $\lambda_1,\lambda_n=-\lambda_1$ and
$\lambda_2=\cdots=\lambda_{n-1}=0$. If $G$ is not complete bipartite,
then its diameter is at least 3, and by Lemma \ref{Lem:diameter},
there are at least 4 distinct eigenvalues, a contradiction.
Thus $G$ is a complete bipartite graph.
If $G$ is complete bipartite, it is easy to find (\ref{9}) holds in equality.

Now assume that $G$ is not connected. Let $H$ be a component of $G$
with $\lambda(H)=\lambda(G)$. Note that $\lambda(H)\geq \sqrt{e(H)}$,
where $e(H)$ is the number of edges in $H$.
Since Theorem \ref{Thm:trianlgeequality} is proved to be true
for the connected case, we have
$$t(G)\geq t(H)\geq \frac{1}{3}\lambda_1(\lambda^2_1-e(H))\geq \frac{1}{3}\lambda_1(\lambda^2_1-e(G)).$$
If equality holds, then $H$ is complete bipartite,
and furthermore, $e(H)=e(G)$, which implies that each of other components (if they exist)
is an isolated vertex. The converse part is obvious. The proof is complete. $\hfill\blacksquare$

\vspace{2mm}
The following lemma is known as Cauchy's interlace theorem (see \cite{CH93}),
which is a direct consequence of the Courant-Fischer-Weyl min-max principle.
A short proof of this theorem can also be found in \cite{55}.

\begin{lem}[Cauchy's Interlace Theorem]\label{Thm:CH93}
Let $A$ be a symmetric $n\times n$
matrix and $B$ be an $r\times r$ principal submatrix of $A$ for some $r<n$. If
the eigenvalues of $A$ are $\lambda_1\geq \lambda_2\geq\cdots \geq\lambda_n$
and the eigenvalues of $B$ are $\mu_1\geq \mu_2\geq\cdots\geq \mu_r$, then
$\lambda_i\geq \mu_i\geq \lambda_{i+n-r}$ for all $1\leq i\leq r$.
\end{lem}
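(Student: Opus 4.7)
The plan is to derive Cauchy's interlace theorem from the Courant--Fischer--Weyl min--max principle, exactly as hinted in the excerpt. Recall that for a symmetric $n\times n$ matrix $A$ with eigenvalues $\lambda_1\geq\cdots\geq\lambda_n$, each eigenvalue admits the two dual variational characterizations
\[
\lambda_i=\max_{\dim S=i}\ \min_{0\neq x\in S}\frac{x^\top A x}{x^\top x}=\min_{\dim S=n-i+1}\ \max_{0\neq x\in S}\frac{x^\top A x}{x^\top x},
\]
and an identical pair of identities holds for $B$ with the optimization ranging over subspaces of $\mathbb{R}^r$. The entire proof will consist of choosing clever test subspaces in each of these formulas.

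The geometric bridge between the two matrices is easy to set up. After a harmless simultaneous row/column permutation (which leaves the spectrum of $A$ invariant), I may assume that $B$ occupies the upper-left $r\times r$ principal block of $A$. Then the embedding $y\mapsto \tilde y=(y,0,\ldots,0)^\top$ identifies $\mathbb{R}^r$ with an $r$-dimensional coordinate subspace $W\subseteq\mathbb{R}^n$, and a direct block computation shows $\tilde y^\top A\tilde y=y^\top B y$ and $\tilde y^\top\tilde y=y^\top y$; hence the Rayleigh quotients of $A$ and $B$ coincide on $W$.

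To prove $\mu_i\leq\lambda_i$, I would invoke the max--min characterization of $\mu_i$, pick a subspace $T\subseteq\mathbb{R}^r$ of dimension $i$ realizing this maximum, and use its image $\tilde T\subseteq W$ as a test subspace in the max--min formula for $\lambda_i$; since $\tilde T$ is $i$-dimensional and the Rayleigh quotients agree on it, this immediately yields $\lambda_i\geq\mu_i$. For the lower bound $\mu_i\geq\lambda_{i+n-r}$, I would dualize: apply the min--max characterization of $\mu_i$, pick an optimal subspace $T\subseteq\mathbb{R}^r$ of dimension $r-i+1$, and feed its embedded version $\tilde T\subseteq W$ into the min--max formula for $\lambda_{i+n-r}$, whose required test dimension is $n-(i+n-r)+1=r-i+1$, matching $\dim\tilde T$ exactly. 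Both sides again see the same quadratic form, so the desired inequality drops out.

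The only delicate point is keeping the dimensional bookkeeping aligned in the dual formulations: pairing $\lambda_i$ with an $i$-dimensional test subspace but $\lambda_{i+n-r}$ with an $(r-i+1)$-dimensional one, so that in each case the subspace realizing the extremum for $B$ embeds into a subspace of the \emph{exact} dimension required on the $A$ side. Once that matching is written out explicitly, both inequalities follow in one line from the Rayleigh quotient identity on $W$, with no further computation needed.
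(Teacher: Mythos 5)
Your proof is correct: the dimension counting in both directions checks out (an $i$-dimensional test subspace for $\lambda_i$ and an $(r-i+1)$-dimensional one for $\lambda_{i+n-r}$, both supplied by embedding the optimal subspaces for $\mu_i$ into the coordinate subspace where the Rayleigh quotients of $A$ and $B$ agree). The paper does not prove this lemma itself but cites it as "a direct consequence of the Courant--Fischer--Weyl min--max principle," which is exactly the route you take.
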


Now we are ready to give a proof of Theorem \ref{Thm:spectraltriangle:counting-size}.

\vspace{2mm}
\noindent
{\bf Proof of Theorem \ref{Thm:spectraltriangle:counting-size}.}
If $t(G)\geq r$ for any positive integer $r$ satisfying $m\geq (2r+1)^2$,
then $t(G)\geq \big\lfloor\frac{\sqrt{m}-1}2\big\rfloor$ and the theorem holds.
In the following, let $r$ be a positive integer subject to $m\geq (2r+1)^2$,
and $G$ be a graph with minimum degree $\delta(G)\geq1$ such that $\lambda(G)\geq \sqrt{m}$ while $t(G)\leq r-1.$
To prove Theorem \ref{Thm:spectraltriangle:counting-size}, it suffices to show that
$G$ is complete and bipartite.

Now let $X=(x_1,x_2,\ldots,x_n)^T$ be the Perron vector of $G$
and $u^*\in V(G)$ such that $x_{u*}=\max_{u\in V(G)}x_u$.
We also let $U=N_G(u^*)$, $W=V(G)\setminus N_G[u^*]$ and $e(U)$ be the number of edges within $U$.

\begin{claim}\label{cl3.4}
If $e(U)=0$ then $G$ is a complete bipartite graph.
\end{claim}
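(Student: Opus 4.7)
The plan is to leverage the Perron eigenvector $X=(x_u)_{u\in V(G)}$, where $u^*$ has the maximum entry, and to iterate the eigenvalue equation once at $u^*$. Under the hypothesis $e(U)=0$, every $v\in U$ satisfies $N(v)\subseteq\{u^*\}\cup W$, so I would derive the identity
$$\lambda^2 x_{u^*}=\sum_{v\in U}\Bigl(x_{u^*}+\sum_{w\in N_W(v)}x_w\Bigr)=d(u^*)\,x_{u^*}+\sum_{w\in W}d_U(w)\,x_w.$$
After bounding $x_w\le x_{u^*}$ termwise, this produces the compact estimate
$$\lambda^2\le d(u^*)+e(U,W)=m-e(W)\le m.$$

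Because the hypothesis forces $\lambda^2\ge m$, both inequalities must be equalities. The first collapse gives $e(W)=0$; the second gives $x_w=x_{u^*}$ for every $w\in W$ with $d_U(w)\ge 1$. I would then invoke the running assumption $\delta(G)\ge 1$ together with $e(W)=0$ and $u^*\not\sim w$ to conclude that the only place a vertex of $W$ can find a neighbor is inside $U$; hence $d_U(w)\ge 1$ and therefore $x_w=x_{u^*}$ for \emph{every} $w\in W$. In particular, every vertex of $W$ also attains the maximum Perron entry.

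The final step is to pin down the adjacencies. Writing the eigenvalue equation at $u^*$ and at an arbitrary $w\in W$,
$$\lambda x_{u^*}=\sum_{v\in U}x_v \qquad\text{and}\qquad \lambda x_w=\sum_{v\in N_U(w)}x_v,$$
I would observe that $x_{u^*}=x_w$ makes the two sums equal, and since $X>0$ (Perron--Frobenius on the component of $u^*$, the only relevant component once isolated vertices are trimmed) this forces $N_U(w)=U$. Thus every $w\in W$ is adjacent to every vertex of $U$; combining with $e(U)=e(W)=0$ and $u^*\not\sim W$ shows that $G$ is the complete bipartite graph with parts $U$ and $\{u^*\}\cup W$.

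The main technical point to watch is the equality analysis of $\lambda^2\le m-e(W)$: both consequences—the vanishing of $e(W)$ and the saturation $x_w=x_{u^*}$—have to be read off simultaneously, and propagating the saturation from the set $\{w\in W:d_U(w)\ge 1\}$ to all of $W$ is exactly where the minimum-degree hypothesis is used. Once the saturation is in hand, comparing the eigenvalue equations at two maximum-entry vertices finishes the proof cleanly.
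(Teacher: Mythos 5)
Your proof is correct, and it diverges from the paper's after the first step. Both you and the paper begin with the same one-step expansion of $\lambda^2 x_{u^*}$ that yields $\lambda^2\le |U|+e(U,W)=m-e(W)$; you use it twice—once to force $e(W)=0$ and once to extract the equality information $d_U(w)(x_{u^*}-x_w)=0$—while the paper uses only the first consequence. After reaching $e(U)=e(W)=0$, the paper observes that $G$ is triangle-free and simply cites Nikiforov's strengthening of Nosal's theorem (triangle-free with $\lambda\ge\sqrt m$ forces equality and a complete bipartite graph plus possible isolated vertices), so it finishes in one line by a black-box appeal to \cite{N09}. You instead continue with a self-contained equality analysis: the minimum-degree hypothesis $\delta(G)\ge1$ together with $e(W)=0$ and $u^*\not\sim W$ gives $d_U(w)\ge1$ for every $w\in W$, hence $x_w=x_{u^*}$ throughout $W$, and then comparing the eigenvector equations $\lambda x_{u^*}=\sum_{v\in U}x_v$ and $\lambda x_w=\sum_{v\in N_U(w)}x_v$ forces $N_U(w)=U$ for all $w$ (by strict positivity of the Perron vector on the connected graph that $\delta\ge1$ and $e(W)=0$ now guarantee). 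Your version is longer but avoids importing Nikiforov's result, which is attractive since it essentially reproves that needed special case inline; the paper's version is shorter but leaves the structural conclusion opaque behind the citation. Both correctly land on $G$ being complete bipartite with parts $U$ and $\{u^*\}\cup W$.
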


\begin{proof}
We first assume that $e(W)\neq0$.
Let $e(U,W)$ be the number of edges with one endpoint in $U$ and the other in $W$.
Then \begin{eqnarray*}
\lambda^2x_{u^*}=\sum_{u\in U}\sum_{w\in N_G(u)}x_w=|U|x_{u^*}+\sum_{w\in W}d_U(w)x_w
\leq(|U|+e(U,W))x_{u^*}<mx_{u^*}.
\end{eqnarray*}
Consequently, $\lambda<\sqrt{m}$, a contradiction. Therefore, $e(W)=0$.

Now we have $e(U)=e(W)=0$. Then $G$ is triangle-free.
As mentioned in the part of introduction, a strengthening of
Nosal's theorem due to Nikiforov (see \cite{N09})
states that $\lambda\leq\sqrt{m}$ for every triangle-free graph with $m$ edges,
with equality if and only if it is a complete bipartite graph (possibly with some isolated vertices).
Recall that $\lambda(G)\geq\sqrt{m}$ and $\delta(G)\geq1$. This implies that
$\lambda(G)=\sqrt{m}$ and $G$ is complete and bipartite.
\end{proof}

By Claim \ref{cl3.4}, we may assume that $e(U)\neq0$.
In this case, $r\geq 2$ and $m\geq (2r+1)^2\geq 25$.
Now we shall determine some forbidden subgraphs of $G$.
For convenience, we first introduce a function $f(x)$.

\begin{claim}\label{cl3.1}
Let $f(x)=\left(\sqrt{m}+x\right)x^2$. If $a\le x\le b\le 0$
then $f(x)\ge \min\{f(a),f(b)\}$.
\end{claim}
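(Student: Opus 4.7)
The plan is to treat this as a short calculus exercise: show $f$ is unimodal on $(-\infty,0]$ (increasing, then decreasing, with no interior local minimum), so the minimum of $f$ over any closed sub-interval is attained at an endpoint.

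Concretely, I would first expand $f(x)=\sqrt{m}\,x^{2}+x^{3}$ and compute
\[
f'(x)=2\sqrt{m}\,x+3x^{2}=x\bigl(2\sqrt{m}+3x\bigr),
\]
whose only roots are $x=0$ and $x_{0}:=-\tfrac{2\sqrt{m}}{3}$. A sign analysis of the two factors on $(-\infty,0]$ then yields
\[
f'(x)>0 \text{ for } x\in\bigl(-\infty,x_{0}\bigr),\qquad f'(x)<0 \text{ for } x\in\bigl(x_{0},0\bigr),
\]
so $f$ is strictly increasing on $(-\infty,x_{0}]$ and strictly decreasing on $[x_{0},0]$. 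In particular, $f$ has no interior local minimum on $(-\infty,0]$.

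Next I would split into the three natural cases according to the position of $x_{0}$ relative to the interval $[a,b]\subseteq(-\infty,0]$. If $b\le x_{0}$, then $f$ is increasing on $[a,b]$, so $f(x)\ge f(a)$; if $a\ge x_{0}$, then $f$ is decreasing on $[a,b]$, so $f(x)\ge f(b)$; and if $a<x_{0}<b$, then restricting to the two monotone pieces $[a,x_{0}]$ and $[x_{0},b]$ gives $f(x)\ge f(a)$ on the left piece and $f(x)\ge f(b)$ on the right piece. Combining, $f(x)\ge\min\{f(a),f(b)\}$ in every case, which is exactly the claim.

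There is no real obstacle here: the only content is the observation that although $f$ fails to be monotone on $(-\infty,0]$, its unique interior critical point $x_{0}$ is a local \emph{maximum}, so the sub-interval minimum must sit at an endpoint. The calculation is worth recording cleanly because the claim will be invoked later with $x$ ranging over eigenvalues (which are indeed $\le 0$ for $i\ge 2$ when they contribute the relevant negative terms), and having the two-endpoint bound stated as a lemma avoids re-doing the monotonicity analysis each time.
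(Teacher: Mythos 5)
Your proof is correct and follows essentially the same route as the paper: compute $f'(x)=x(2\sqrt{m}+3x)$, observe that $f$ is increasing on $(-\infty,-\tfrac{2}{3}\sqrt{m}]$ and decreasing on $[-\tfrac{2}{3}\sqrt{m},0]$, and conclude that on any sub-interval of $(-\infty,0]$ the minimum is attained at an endpoint. The paper states this more tersely, but the content is identical.
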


\begin{proof}
Clearly, $f'(x)=x\left(3x+2\sqrt{m}\right)$.
Thus, $f(x)$ is monotonic increasing when $x\in \left(-\infty,-\frac{2}{3}\sqrt{m}\right]$,
and monotonic decreasing when $x\in \left(-\frac{2}{3}\sqrt{m},0\right]$.
Therefore, $f(x)\ge \min\{f(a),f(b)\}$.
\end{proof}

\begin{claim}\label{cl3.2}
If $\lambda_n(G)\ge -\sqrt{m-2}$ then $f(\lambda_n(G))\ge \sqrt{m}-1$ for any integer $m\ge 3$.
\end{claim}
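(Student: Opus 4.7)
\textbf{Proof plan for Claim \ref{cl3.2}.}
The strategy is to apply the monotonicity in Claim \ref{cl3.1} to an interval trapping $\lambda_n(G)$ between two non-positive values. The hypothesis supplies the left endpoint $-\sqrt{m-2}$. For the right endpoint, I would exploit the ambient context of the ongoing proof of Theorem \ref{Thm:spectraltriangle:counting-size}: we are in the case $e(U)\neq 0$. Picking any edge $vw$ inside $U=N_G(u^*)$, the triple $\{u^*,v,w\}$ induces a $K_3$ in $G$, since $v,w\in N_G(u^*)$ forces $u^*v,u^*w\in E(G)$. Cauchy's Interlace Theorem (Lemma \ref{Thm:CH93}) then yields $\lambda_n(G)\le \lambda_3(K_3)=-1$.

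Thus $-\sqrt{m-2}\le \lambda_n(G)\le -1\le 0$, and Claim \ref{cl3.1} applies to give $f(\lambda_n(G))\ge \min\{f(-\sqrt{m-2}),f(-1)\}$. A direct computation gives $f(-1)=\sqrt{m}-1$. After rationalization,
\[
f(-\sqrt{m-2})=(\sqrt{m}-\sqrt{m-2})(m-2)=\frac{2(m-2)}{\sqrt{m}+\sqrt{m-2}}\ge \frac{m-2}{\sqrt{m}}=\sqrt{m}-\frac{2}{\sqrt{m}},
\]
which is at least $\sqrt{m}-1$ as soon as $m\ge 4$. The remaining case $m=3$ is immediate since $-\sqrt{m-2}=-1$ forces $f(-\sqrt{m-2})=f(-1)=\sqrt{3}-1$. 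In both subcases the minimum equals $\sqrt{m}-1$, so $f(\lambda_n(G))\ge \sqrt{m}-1$ as desired.

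The step I expect to be most delicate is the appeal to Cauchy interlacing: it requires that $K_3$ appear as an \emph{induced} subgraph of $G$ (so that $A(K_3)$ sits as a principal submatrix of $A(G)$), not merely as a subgraph. Fortunately, the choice $\{u^*,v,w\}$ with $vw\in E(U)$ produces exactly such an induced $K_3$, so no new structural argument is needed. Beyond this, the only nontrivial calculation is the rationalization for $f(-\sqrt{m-2})$, which reduces the required inequality to the elementary bound $\sqrt{m}+\sqrt{m-2}\le 2\sqrt{m}$.
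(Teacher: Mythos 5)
Your proof is correct and follows essentially the same route as the paper: establish $\lambda_n(G)\le -1$ via Cauchy interlacing, apply Claim \ref{cl3.1} on $[-\sqrt{m-2},-1]$, and rationalize $f(-\sqrt{m-2})$. The paper obtains $\lambda_n(G)\le -1$ more simply and self-containedly by interlacing against an induced $K_2$ (any edge, available since $m\ge 3$) rather than the ambient induced $K_3$, but the two observations yield the identical bound.
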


\begin{proof}
Since every graph $G$ contains $K_2$ as an induced subgraph, by Lemma \ref{Thm:CH93},
we have $\lambda_n(G)\leq \lambda_2(K_2)=-1$.
Subsequently, by Claim \ref{cl3.1}, we have $f(\lambda_n(G))\ge \min \{f(-1), f\left(-\sqrt{m-2}\right)\}$.
Note that $f(-1)=\sqrt{m}-1$, and
it is easy to check that $f(-\sqrt{m-2})=\frac{2(m-2)}{\sqrt{m}+\sqrt{m-2}}\geq \sqrt{m}-1$ for $m\ge 3$.
This proves Claim \ref{cl3.2}.
\end{proof}

\begin{figure}[t]
\centering \setlength{\unitlength}{0.9pt}
\begin{center}
\begin{picture}(430.7,78.3)
\put(0.7,67.4){\circle*{4}} \put(50.0,67.4){\circle*{4}}
\qbezier(0.7,67.4)(25.4,67.4)(50.0,67.4) \put(0.0,29.0){\circle*{4}}
\put(49.3,29.0){\circle*{4}}
\qbezier(0.0,29.0)(24.7,29.0)(49.3,29.0)
\put(109.5,26.8){\circle*{4}} \put(86.3,26.8){\circle*{4}}
\qbezier(109.5,26.8)(97.9,26.8)(86.3,26.8)
\put(131.2,26.8){\circle*{4}}
\qbezier(109.5,26.8)(120.4,26.8)(131.2,26.8)
\put(152.3,26.8){\circle*{4}}
\qbezier(131.2,26.8)(141.7,26.8)(152.3,26.8)
\put(118.2,76.9){\circle*{4}}
\qbezier(118.2,76.9)(102.2,51.8)(86.3,26.8)
\qbezier(118.2,76.9)(113.8,51.8)(109.5,26.8)
\qbezier(118.2,76.9)(124.7,51.8)(131.2,26.8)
\qbezier(118.2,76.9)(135.2,51.8)(152.3,26.8)
\put(186.3,72.5){\circle*{4}} \put(238.5,27.6){\circle*{4}}
\put(238.5,72.5){\circle*{4}} \put(186.3,27.6){\circle*{4}}
\qbezier(238.5,72.5)(212.4,50.0)(186.3,27.6)
\qbezier(186.3,72.5)(212.4,50.0)(238.5,27.6)
\qbezier(186.3,72.5)(212.4,72.5)(238.5,72.5)
\qbezier(186.3,72.5)(186.3,50.0)(186.3,27.6)
\qbezier(238.5,72.5)(238.5,50.0)(238.5,27.6)
\qbezier(186.3,27.6)(212.4,27.6)(238.5,27.6)
\put(297.3,78.3){\circle*{4}} \put(279.9,52.2){\circle*{4}}
\qbezier(297.3,78.3)(288.6,65.3)(279.9,52.2)
\put(313.9,52.2){\circle*{4}}
\qbezier(297.3,78.3)(305.6,65.3)(313.9,52.2)
\put(298.0,26.8){\circle*{4}}
\qbezier(279.9,52.2)(288.9,39.5)(298.0,26.8)
\qbezier(313.9,52.2)(306.0,39.5)(298.0,26.8)
\qbezier(279.9,52.2)(296.9,52.2)(313.9,52.2)
\put(340.8,71.1){\circle*{4}}
\qbezier(313.9,52.2)(327.3,61.6)(340.8,71.1)
\put(341.5,59.5){\circle*{4}}
\qbezier(313.9,52.2)(327.7,55.8)(341.5,59.5)
\put(341.5,46.4){\circle*{4}}
\qbezier(313.9,52.2)(327.7,49.3)(341.5,46.4)
\put(341.5,31.9){\circle*{4}}
\qbezier(313.9,52.2)(327.7,42.1)(341.5,31.9)
\put(403.8,78.3){\circle*{4}} \put(379.2,55.1){\circle*{4}}
\qbezier(403.8,78.3)(391.5,66.7)(379.2,55.1)
\put(430.7,55.8){\circle*{4}}
\qbezier(403.8,78.3)(417.2,67.1)(430.7,55.8)
\put(389.3,27.6){\circle*{4}}
\qbezier(379.2,55.1)(384.3,41.3)(389.3,27.6)
\put(422.0,27.6){\circle*{4}}
\qbezier(430.7,55.8)(426.3,41.7)(422.0,27.6)
\qbezier(389.3,27.6)(405.6,27.6)(422.0,27.6)
\put(13.8,0.0){\makebox(0,0)[tl]{$G_{1}$}}
\put(112.4,0.7){\makebox(0,0)[tl]{$G_{2}$}}
\put(204.5,0.7){\makebox(0,0)[tl]{$G_{3}$}}
\put(301.6,0.7){\makebox(0,0)[tl]{$G_{4}$}}
\put(400.2,2.2){\makebox(0,0)[tl]{$G_{5}$}}
\end{picture}
\end{center}
\caption{Some forbidden induced subgraphs of $G$.}\label{fig01}
\end{figure}
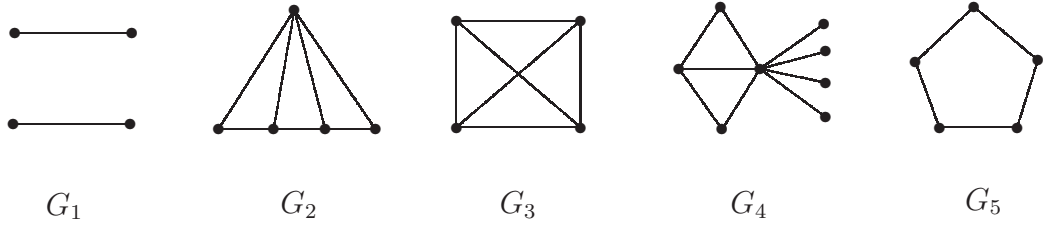

\begin{claim}\label{cl3.3}
$G$ does not contain these graphs as induced subgraphs (see Figure \ref{fig01}).
\end{claim}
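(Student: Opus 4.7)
The plan is to prove Claim~\ref{cl3.3} by a uniform contradiction argument across the five subgraphs. For each $i\in\{1,\ldots,5\}$ I would assume $G$ contains an induced copy of $G_i$ and aim to derive $t(G)\ge r$.

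The unified spectral engine is identity~(\ref{8}) from Theorem~\ref{Thm:trianlgeequality}. Since $\lambda_1\ge\sqrt{m}$, the term $\lambda_1(\lambda_1^{2}-m)/3$ is nonnegative, and by Perron--Frobenius $(\lambda_1+\lambda_i)\lambda_i^{2}\ge 0$ for every $i\ge 2$; restricting to the negative tail gives
\[
t(G)\;\ge\;\frac{1}{6}\sum_{\lambda_i(G)\le -1}(\lambda_1+\lambda_i)\lambda_i^{2}\;\ge\;\frac{1}{6}\sum_{\lambda_i(G)\le -1}f\bigl(\lambda_i(G)\bigr),
\]
where $f(x)=(\sqrt{m}+x)x^{2}$ as in Claim~\ref{cl3.1}. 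The trace identity $\sum\lambda_i^{2}=2m$ together with $\lambda_1^{2}\ge m$ forces at most one eigenvalue to drop below $-\sqrt{m-2}$, so every other tail eigenvalue lies in $[-\sqrt{m-2},-1]$ and contributes at least $\sqrt{m}-1$ to the sum by Claim~\ref{cl3.2}. If $G_i$ has $k_i$ eigenvalues in $(-\infty,-1]$, Cauchy's interlace theorem (Lemma~\ref{Thm:CH93}) transfers this to $G$, yielding $\lambda_{n-k_i+1}(G),\ldots,\lambda_n(G)\le -1$. Combining,
\[
t(G)\;\ge\;\frac{(k_i-1)(\sqrt{m}-1)}{6}\;\ge\;\frac{(k_i-1)r}{3},
\]
where I used $\sqrt{m}-1\ge 2r$ from $m\ge(2r+1)^{2}$. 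Whenever $k_i\ge 4$ this already yields the contradiction $t(G)\ge r$.

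The main obstacle is that inspection of the spectra of $G_1,\ldots,G_5$ shows $k_i\in\{2,3\}$ in every case (for instance $k=3$ for $G_3=K_{4}$ with spectrum $\{3,-1,-1,-1\}$, and $k=2$ for $G_1=2K_{2}$ and $G_5=C_{5}$), so the purely spectral bound falls short by a factor of $3/(k_i-1)\in\{\tfrac{3}{2},3\}$. To close this gap I would supplement the spectral estimate with a combinatorial argument powered by the Perron-maximality $x_{u^{*}}=\max_{u}x_u$ and the hypothesis $e(U)\ne 0$: the eigenvalue equation $\lambda x_{u^{*}}=\sum_{v\in N(u^{*})}x_v$ together with the specific adjacency pattern imposed by an induced copy of $G_i$ either yields an upper bound on $\lambda$ contradicting $\lambda\ge\sqrt{m}$ (in the spirit of the computation at the beginning of Claim~\ref{cl3.4}) or forces additional triangles through $u^{*}$ sufficient to push $t(G)$ above $r-1$.

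I expect the hardest case to be $G_3=K_{4}$: here the spectral bound gives only $t(G)\ge 2r/3$, so the Perron-weighted argument must supply roughly $r/3$ extra triangles beyond the four lying inside the induced $K_{4}$. For $G_1=2K_{2}$ and $G_5=C_{5}$ the spectral gap is larger ($t(G)\ge r/3$), but the neighborhood structures around these induced copies are also more rigid, so the combinatorial supplement is correspondingly easier to read off from $x_{u^{*}}$-maximality. The remaining cases $G_2$ and $G_4$ are finite eigenvalue computations (their spectra are easily found from their small adjacency matrices using their bilateral symmetry), followed by the same two-pronged spectral/Perron argument. The case-by-case bookkeeping, using $m\ge 25$ and $r\ge 2$, is the main technical content.
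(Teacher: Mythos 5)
Your core spectral engine---Eq.~(\ref{8}), nonnegativity of each summand by Perron--Frobenius, then lower-bounding via Claims~\ref{cl3.1} and~\ref{cl3.2}---is the paper's engine too. But there are two concrete places where your bound $t(G)\ge\tfrac{(k_i-1)(\sqrt{m}-1)}{6}$ is both wrong in spirit and weaker than what the same tools yield, and the gap you then try to paper over with a vague ``combinatorial supplement'' does not actually exist.

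First, you discard the most negative eigenvalue as ``possibly below $-\sqrt{m-2}$.'' The paper keeps it. The point is that once Cauchy interlacing with the induced $G_i$ supplies enough mass away from $\lambda_n$ --- e.g.\ for $G_3=K_4$ you get $\lambda_{n-2},\lambda_{n-1}\le -1$, for $G_1=2K_2$ you get $\lambda_2\ge 1$ and $\lambda_{n-1}\le -1$ --- the trace identity forces $\lambda_n^2\le 2m-\lambda_1^2-\lambda_2^2-\lambda_{n-1}^2\le m-2$, so $\lambda_n$ \emph{also} lies in the favorable window and Claim~\ref{cl3.2} applies to it too. Thus all $k_i$ tail eigenvalues contribute $\ge\sqrt{m}-1$, not $k_i-1$ of them. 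Second, you restrict the sum to $\lambda_i\le -1$ and thereby drop the \emph{positive} eigenvalue contribution. For $G_1,G_2,G_4,G_5$ (which have only $k_i=2$ small eigenvalues) the indispensable third term is $f(\lambda_2)$: interlacing gives $\lambda_2\ge 1$ (case $G_1$) or $\lambda_2>\tfrac{\sqrt{3}}{3}$ (cases $G_2,G_4,G_5$, from Table~\ref{tab01}), and $f$ grows on $[0,\infty)$, so $f(\lambda_2)\ge\sqrt{m}\cdot\lambda_2^2$ supplies the remaining piece. With those two fixes the purely spectral estimate already reaches $t(G)\ge\tfrac{1}{6}\bigl(f(\lambda_2)+f(\lambda_{n-1})+f(\lambda_n)\bigr)>\tfrac{\sqrt{m}-1}{2}$ (and for $G_3$, $\tfrac{1}{6}\bigl(f(\lambda_{n-2})+f(\lambda_{n-1})+f(\lambda_n)\bigr)\ge\tfrac{\sqrt{m}-1}{2}$); no Perron-weighted combinatorial argument is needed, and indeed you never say how yours would go. Your guess that $G_3=K_4$ is the hardest case is also backwards: it is the cleanest, precisely because it forces three tail eigenvalues $\le -1$, all of which land in $[-\sqrt{m-2},-1]$.
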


\begin{proof}
Let $|G|=n$, $\lambda_j:=\lambda_j(G)$ for $j\in\{1,2,\ldots,n\}$,
and suppose to the contrary that $G$ contains $G_i$ as an induced subgraph for some $\in\{1,2,\ldots,5\}$.
To obtain a contradiction,
it suffices to show $t(G)\geq\lfloor\frac{\sqrt{m}-1}2\rfloor.$
Recall that $e(U)\neq0$, and so $t(G)\geq1$. Thus, we only need to consider
the case of $m\geq25$.

We first consider the case of $i=1$.
Lemma \ref{Thm:CH93} gives that
$\lambda_2\ge \lambda_2(G_1)=1$ and $\lambda_{n-1}\le \lambda_3(G_1)=-1.$
Since $\lambda_1^2\ge m$, we have
\begin{align}\label{la0}
\lambda_n^2=2m-\sum_{j=1}^{n-1}\lambda_j^2\le 2m-(\lambda_1^2+\lambda_2^2+\lambda_{n-1}^2)\le m-2.
\end{align}
It follows from Claim \ref{cl3.2} that
\begin{align}\label{la1}
 f(\lambda_n)\ge \sqrt{m}-1.
\end{align}
On the other hand, note that $\lambda_{n-1}^2+\lambda_{n}^2\le 2m-\lambda_1^2\le m$ and $\lambda_{n-1}^2\le \lambda_n^2$.
Then, $\lambda_{n-1}^2\leq\frac{m}{2}$, that is, $\lambda_{n-1}\geq-\sqrt{\frac{m}{2}}$.
Since $m\geq25$, by Claim \ref{cl3.1} we have
\begin{align}\label{la2}
 f(\lambda_{n-1})\ge \min \{f(-1), f(-\sqrt{m/2})\}=\sqrt{m}-1.
\end{align}
Moreover, $f(\lambda_2)\geq \sqrt{m}\cdot\lambda^2_2\ge \sqrt{m}$, as $\lambda_2\geq1$.
Combining this with (\ref{8}), (\ref{la1}) and (\ref{la2}), we have
\begin{eqnarray}\label{la-1}
t(G)\geq \frac16\left(f(\lambda_2)+f(\lambda_{n-1})+f(\lambda_n)\right)>\frac{1}{6}(3\sqrt{m}-3)=\frac{\sqrt{m}-1}{2}.
\end{eqnarray}

If $i\in\{2,4,5\}$,
then by Matlab we can see $\lambda_2(G_i)>\frac{\sqrt{3}}3$ and $\lambda_{|G_i|-1}(G_i)<-\frac43$ (see Table \ref{tab01}).
It follows from Cauchy's interlace theorem that $\lambda_2>\frac{\sqrt{3}}3$ and $\lambda_{n-1}<-\frac43$.
Observe that $\lambda_2^2+\lambda_{n-1}^2>2$.
Consequently, (\ref{la0}) holds, and so (\ref{la1}) holds.
Moreover,
we can check that $f(\lambda_{n-1})\ge \min \{f(-\frac43),f(-\sqrt{\frac m2})\}\geq\frac53\sqrt{m}-2$
for $m\geq25$.
Combining with $f(\lambda_n)\geq\sqrt{m}-1$ and $f(\lambda_2)>f(\frac{\sqrt{3}}3)>\frac{\sqrt m}3,$
we also have (\ref{la-1}).

\begin{table}[htbp]
\caption{$\lambda_2(G_i)$ and $\lambda_{|G_i|-1}(G_i)$ for $i\in\{2,4,5\}$.}\label{tab01}
\label{tab:test}
\begin{tabular*}{\textwidth}{@{}@{\extracolsep{\fill}}cccc@{}}
\toprule
& $G_2$  & $G_4$  & $G_5$ \\
 \midrule
$\lambda_2$        & 0.6180 & 0.7660 &  0.6180 \\
$\lambda_{|G_i|-1}$ & -1.4728 &  -1.3807 & -1.6180 \\
 \bottomrule
 \end{tabular*}
 \end{table}

If $i=3$, then by Cauchy's interlace theorem,
$\lambda_n\leq\lambda_{n-1}\le \lambda_{n-2}\leq \lambda_2(K_4)=-1,$
and hence (\ref{la0}-\ref{la1}) hold.
Note that $\lambda_{n-2}\geq \lambda_{n-1}\geq-\sqrt{\frac{m}2}.$
Thus, (\ref{la2}) also holds, and similarly,
$f(\lambda_{n-2})\ge \min \{f(-1), f(-\sqrt{\frac m2})\}\ge \sqrt{m}-1.$
Therefore,
\begin{align*}
 t(G)\geq \frac16(f(\lambda_{n-2})+f(\lambda_{n-1})+f(\lambda_n))\geq \frac{1}{6}(3\sqrt{m}-3)=\frac{\sqrt{m}-1}{2}.
\end{align*}
This proves the claim.
\end{proof}

Let $U_0:=\{u\in U: d_U(u)=0\}$ and $U_1:=U\setminus U_0$.
Note that $e(U)\neq0$. Then $1\leq e(U_1)\leq t(G)\leq r-1$.
By Claim \ref{cl3.3}, $G$ does not contain $G_1$ as an induced graph,
which implies that $G[U_1]$ is connected. Furthermore, $G$ does
not contain $G_2$ and $G_3$ as induced graphs, that is,
$G[U_1]$ does not contain any induced $P_4$  (a path of order 4)
and triangles. It follows that $G[U_1]$ is a star, and so
$|U_1|=e(U_1)+1\leq r$. Moreover,
$\lambda x_{u^*}=\sum_{u\in U}x_u\leq|U|x_{u^*},$
which implies $|U|\geq \lambda\geq\sqrt{m}\geq 2r+1.$

If $e(U_1)\geq2$, then $r\geq 3$ as $e(U_1)\leq r-1$.
Consequently, $|U_0|=|U|-|U_1|\geq r+1\geq4$.
Thus, $G$ contains $G_4$ as an induced subgraph, which contradicts Claim \ref{cl3.3}.
Therefore, $e(U_1)=1.$

We will further show that $e(W)=0$.
Suppose to the contrary that $w_1w_2$ is an edge within $W$.
Then $N_U(w_1)\cup N_U(w_2)=U$
(otherwise, if there exists $u\in U$ with $u\notin N_U(w_1)\cup N_{U}(w_2)$,
then $w_1w_2$ and $u^*u$ induce a copy of $G_1$).
Furthermore, if $N_U(w_1)=\varnothing$, then $N_U(w_2)=U$.
Thus, $\lambda x_{u^*}=\sum_{u\in U}x_u<\lambda x_{w_2},$
which contradicts the choice of $u^*$.
Hence, $N_U(w_1)\neq\varnothing$, and similarly, $N_U(w_2)\neq\varnothing$.
It follows that $G$ contains an induced 5-cycle (recall that $|U|\geq 3$
and $e(U)=1$), a contradiction. Therefore, $e(W)=0$.

Now assume that $u_1u_2$ is the unique edge within $U_1$.
Then for each $w\in W$, we have $w\in N_W(u_1)\cup N_W(u_2)$
(otherwise, we get an induced copy of $G_1$).
Furthermore, $|N_W(u_1)\cap N_W(u_2)|\leq r-2$, as $t(G)\leq r-1$.
If $|W|\leq r$, then $e(\{u_1,u_2\},W)\leq 2r-2$ and hence
$\lambda(x_{u_1}+x_{u_2})<2x_{u^*}+(x_{u_1}+x_{u_2})+(2r-2)x_{u^*},$
as $x_w<x_{u^*}$ for each $w\in W$ with $w\notin N_W(u_1)\cap N_W(u_2)$.
It follows that $x_{u_1}+x_{u_2}<\frac {2r}{\lambda-1}x_{u^*}\leq x_{u^*}$,
since $\lambda\geq \sqrt{m}\geq 2r+1$.
Now we have
\begin{eqnarray}\label{la3}
\lambda^2x_{u^*}=|U|x_{u^*}+(x_{u_1}+x_{u_2})+\sum_{w\in W}d_U(w)x_w
<(|U|+1+e(U,W))x_{u^*}=mx_{u^*}.
\end{eqnarray}
This gives $\lambda<\sqrt{m},$ a contradiction.
Therefore, $|W|\geq r+1,$
and hence there are at least three vertices in $W$, say $w_1,w_2$ and $w_3$,
which are not in $N_W(u_1)\cap N_W(u_2)$.

Assume without loss of generality that $x_{u_1}\geq x_{u_2}$.
For $i\in\{1,2,3\}$, $\lambda x_{w_i}\leq \sum_{u\in U}x_u-x_{u_2}=\lambda x_{u^*}-x_{u_2},$
and so $x_{w_i}\leq x_{u^*}-\frac1\lambda x_{u_2}$.
Now set $a:=\sum_{i=1}^3d_U(w_i)$. Then
$\sum_{i=1}^3d_U(w_i)x_{w_i}\leq ax_{u^*}-\frac{a}\lambda x_{u_2}.$
If $a>\lambda$ then $\sum_{i=1}^3d_U(w_i)x_{w_i}<ax_{u^*}-x_{u_2}$,
and hence (\ref{la3}) holds. If $a\leq\lambda$ then
$\sum_{i=1}^3\lambda x_{w_i}\leq ax_{u^*}\leq\lambda x_{u^*},$
that is, $\sum_{i=1}^3x_{w_i}\leq x_{u^*}$.
We also have (\ref{la3}).
In both cases, we get that $\lambda<\sqrt{m},$
a contradiction. Therefore, $e(U)=0$.
This completes the proof. $\hfill\blacksquare$

\section{Proof of Theorem \ref{spectraltriangle:counting-order}}\label{Sec:3}

In this section, we prove Theorem \ref{spectraltriangle:counting-order}
as follows.

\vspace{2mm}

\noindent
{\bf Proof of Theorem \ref{spectraltriangle:counting-order}.}
Suppose to the contrary that there exists a graph $G$,
except for $T_{n,2}$, on $n$ vertices with $\lambda^2(G)\ge \lfloor\frac{n^2}4\rfloor$
but $t(G)\le\lfloor\frac{n}2\rfloor-2$. We may assume $\lambda(G)$ is maximum.
Then $G$ is connected; since otherwise, adding an edge between two
components can increase the value of spectral radius and preserve the number of triangles.
Let $X=(x_1,x_2,\ldots,x_n)^T$ be the Perron vector of $G$ and $u^*\in V(G)$
such that $x_{u^*}=\max_{u\in V(G)}x_u$.
For convenience, let $\lambda:=\lambda(G)$, $t:=t(G)$
and $t^*$ be the number of triangles
containing $u^*$ in $G$.
We shall prove several claims. \vskip 0.1in

\begin{claim}\label{claim0}
Let $A=N_G(u^*)$ and
$B=V(G)\setminus N_G[u^*]$. Then $|A|\geq \lambda$.
Furthermore, we have  $|A|\geq\lceil\frac n2\rceil$ and $|B|\leq\lfloor\frac n2\rfloor-1$.
\end{claim}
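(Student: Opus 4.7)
The proof is short and rests on two standard ingredients: the Perron eigenvalue equation at a maximum-entry vertex, and the integrality of $|A|$.

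First I would bound $|A|$ from below by $\lambda$. Since $X$ is a Perron eigenvector, at vertex $u^*$ we have
\[
\lambda\,x_{u^*} \;=\; \sum_{v\in A} x_v \;\le\; |A|\,x_{u^*},
\]
using the maximality of $x_{u^*}$. Dividing by $x_{u^*}>0$ gives $|A|\ge \lambda$.

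Next I would boost this to $|A|\ge \lceil n/2\rceil$ by splitting on the parity of $n$. If $n$ is even, then $\lambda\ge\sqrt{\lfloor n^2/4\rfloor}=n/2$, so $|A|\ge n/2 = \lceil n/2\rceil$ immediately. If $n$ is odd, then
\[
\lambda^2 \;\ge\; \left\lfloor\frac{n^2}{4}\right\rfloor \;=\; \frac{n^2-1}{4} \;>\; \left(\frac{n-1}{2}\right)^2,
\]
since the last inequality reduces to $2n-2>0$. Hence $\lambda > (n-1)/2$, and because $|A|$ is an integer strictly greater than the integer $(n-1)/2$, we get $|A|\ge (n-1)/2+1=(n+1)/2=\lceil n/2\rceil$.

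Finally, since $A$, $B$ and $\{u^*\}$ partition $V(G)$, we have $|B|=n-1-|A|\le n-1-\lceil n/2\rceil=\lfloor n/2\rfloor-1$, which gives the last assertion. The only mildly delicate point is the odd case, where one must observe that $\lambda$ strictly exceeds $(n-1)/2$ and then use integrality of $|A|$ to round up; otherwise the inequality $|A|\ge\lambda$ alone would not reach $\lceil n/2\rceil$.
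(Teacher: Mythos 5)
Your proof is correct and follows essentially the same route as the paper: the Perron eigenvalue equation at $u^*$ gives $|A|\ge\lambda$, a parity split combined with integrality of $|A|$ yields $|A|\ge\lceil n/2\rceil$, and $|A|+|B|+1=n$ finishes. No substantive differences from the paper's argument.
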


\begin{proof}
By the choice of $u^*$, we have
$\lambda x_{u^*}=\sum_{u\in A}x_u\leq |A|x_{u^*}$.
Hence, $|A|\geq\lambda$, and so $|A|\geq \sqrt{\lfloor\frac{n^2}4\rfloor}.$
If $n$ is even, then $|A|\geq \frac n2=\lceil\frac n2\rceil.$
If $n$ is odd, then $|A|\geq \sqrt{\frac{n^2-1}4}>\frac{n-1}2$,
and so $|A|\geq\frac{n+1}2=\lceil\frac n2\rceil$ as $|A|$ is an integer.
Note that $|A|+|B|+1=n$. The inequality $|B|\leq\lfloor\frac n2\rfloor-1$ follows from
$|A|\geq\lceil\frac n2\rceil$.
\end{proof}

The following claim is a direct consequence of Claim \ref{claim0} and the fact $|A|+|B|+1=n$.
\begin{claim}\label{claim1}
 $|A|(|B|+1)-k|B|$ attains maximum at
$|B|=\lfloor\frac n2\rfloor-1$ if $k\in\{0,1\}$; and
attains maximum at
$|B|=\lfloor\frac n2\rfloor-2$ if $k=2$.
\end{claim}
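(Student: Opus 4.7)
The plan is to reduce the two-variable expression to a single-variable quadratic by exploiting the identity $|A|+|B|+1=n$ noted right before Claim~\ref{claim1}, and then locate the vertex of the resulting downward parabola relative to the feasible range that Claim~\ref{claim0} provides.

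Concretely, I would first set $b:=|B|$ and substitute $|A|=n-1-b$ to obtain
\[
f_k(b)\;:=\;(n-1-b)(b+1)-kb\;=\;-b^{2}+(n-2-k)\,b+(n-1),
\]
which is a strictly concave quadratic in $b$ with unique real maximizer $b_k^{*}=(n-2-k)/2$. By Claim~\ref{claim0}, the admissible integer values of $b$ lie in $\{0,1,\ldots,\lfloor n/2\rfloor-1\}$, so the task reduces to comparing the integer point(s) in this interval nearest to $b_k^{*}$.

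For $k\in\{0,1\}$, the plan is to verify in each parity case that $b_k^{*}\ge \lfloor n/2\rfloor-1$, whence $f_k$ is nondecreasing on the feasible interval and therefore maximized at the right endpoint $b=\lfloor n/2\rfloor-1$. When $n$ is even and $k=1$, the vertex $(n-3)/2$ falls exactly midway between $(n-4)/2$ and $(n-2)/2=\lfloor n/2\rfloor-1$, so by symmetry of the parabola $f_1((n-4)/2)=f_1((n-2)/2)$ and $\lfloor n/2\rfloor-1$ still attains the maximum. For $k=2$, $b_2^{*}=(n-4)/2$; when $n$ is even this is exactly $\lfloor n/2\rfloor-2$ so the maximum is attained there, and when $n$ is odd the two integer maximizers $(n-5)/2=\lfloor n/2\rfloor-2$ and $(n-3)/2=\lfloor n/2\rfloor-1$ are equidistant from the vertex and both give the same value, so in particular $b=\lfloor n/2\rfloor-2$ attains the maximum.

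There is no real obstacle here; the only care needed is the parity bookkeeping of $n$ for each value of $k$, which I would organize in a brief table or as three displayed lines verifying $f_k(\lfloor n/2\rfloor-1)\ge f_k(b)$ (resp.\ $f_2(\lfloor n/2\rfloor-2)\ge f_2(b)$) for all $b$ in the feasible range. The whole argument is therefore a one-paragraph verification using concavity and endpoint comparison.
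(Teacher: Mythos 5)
Your proposal is correct and fills in exactly what the paper leaves implicit: the paper offers no proof, stating only that Claim~\ref{claim1} is ``a direct consequence of Claim~\ref{claim0} and the fact $|A|+|B|+1=n$,'' and your substitution $|A|=n-1-b$ followed by locating the vertex of the concave quadratic $f_k(b)=-b^2+(n-2-k)b+(n-1)$ relative to the feasible range $\{0,\ldots,\lfloor n/2\rfloor-1\}$ is the natural reading of that remark. One small wording slip: you open by saying the plan for $k\in\{0,1\}$ is to check $b_k^*\ge\lfloor n/2\rfloor-1$ ``in each parity case,'' but for $k=1$ and $n$ even one has $b_1^*=(n-3)/2<(n-2)/2=\lfloor n/2\rfloor-1$, which you then correctly handle via the symmetry argument in the next sentence---so the final argument is sound, only the initial phrasing overstates which cases the endpoint-monotonicity argument covers.
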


\begin{claim}\label{claim2}
$t^*\geq1$ and $|B|\geq1$.
\end{claim}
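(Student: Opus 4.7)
The plan is to argue both statements by contradiction, applying the squared eigenequation $A^2X=\lambda^2X$ at the coordinate $u^*$ and pairing the resulting estimate with the optimization already done in Claim \ref{claim1}.

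First, I would prove $t^*\ge 1$. Assume $t^*=0$, equivalently $e(A)=0$. Reading off the $u^*$-coordinate of $A^2X$ gives
\[
\lambda^2 x_{u^*}=|A|x_{u^*}+\sum_{w\in B}d_A(w)x_w\le\bigl(|A|+e(A,B)\bigr)x_{u^*}\le|A|(|B|+1)x_{u^*},
\]
where the last step uses that all edges incident to $\{u^*\}\cup A$ lie between $A$ and $\{u^*\}\cup B$. By Claim \ref{claim1} with $k=0$, $|A|(|B|+1)\le\lfloor n^2/4\rfloor$, while the hypothesis gives $\lambda^2\ge\lfloor n^2/4\rfloor$, so every inequality is tight. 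Tightness forces $\{|A|,|B|+1\}=\{\lceil n/2\rceil,\lfloor n/2\rfloor\}$ and makes every vertex of $B$ adjacent to every vertex of $A$. If additionally $e(B)=0$, then $G=K_{\lceil n/2\rceil,\lfloor n/2\rfloor}=T_{n,2}$, contradicting the choice of $G$. If instead some edge $w_1w_2$ lies inside $B$, then each vertex of $A$ forms a triangle with $\{w_1,w_2\}$, yielding $t(G)\ge|A|\ge\lceil n/2\rceil>\lfloor n/2\rfloor-2$, another contradiction.

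Next, I would show $|B|\ge 1$. Assume $|B|=0$, so $u^*$ is adjacent to every other vertex and every edge of $G[A]$ closes a triangle through $u^*$; hence $t(G)\ge t^*=e(A)$. The squared eigenequation at $u^*$ now reads
\[
\lambda^2 x_{u^*}=|A|x_{u^*}+\sum_{u\in A}d_A(u)x_u\le\bigl(|A|+2e(A)\bigr)x_{u^*},
\]
whence $e(A)\ge\tfrac{1}{2}\bigl(\lambda^2-(n-1)\bigr)\ge\tfrac{1}{2}\bigl(\lfloor n^2/4\rfloor-n+1\bigr)$. A direct arithmetic check shows that the right-hand side is at least $\lfloor n/2\rfloor-1$ for every $n\ge 5$; the cases $n\le 4$ are vacuous (since $\lfloor n/2\rfloor-2<1$) or absorbed by $t^*\ge 1$. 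Combining gives $t(G)\ge\lfloor n/2\rfloor-1$, contradicting $t(G)\le\lfloor n/2\rfloor-2$.

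The main obstacle is the equality analysis in the first part: one must carefully argue that saturating $|A|(|B|+1)\le\lfloor n^2/4\rfloor$ pins the partite sizes to $\{\lceil n/2\rceil,\lfloor n/2\rfloor\}$ and makes the bipartition between $A$ and $\{u^*\}\cup B$ complete, so that a single edge inside $B$ already overshoots the triangle budget. Claim \ref{claim1} supplies exactly the optimization needed, and the same squared-eigenequation trick will be expected to power the later claims refining the structure of $G$.
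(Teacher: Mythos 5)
Your proof is correct and follows essentially the same route as the paper: contradiction via the squared eigenequation at $u^*$, the tightness analysis through Claim~\ref{claim1}, and a direct numeric estimate when $|B|=0$. The only variation is your finishing move for $t^*\ge 1$: once tightness forces every vertex of $B$ to be adjacent to all of $A$, you close by counting the $|A|\ge\lceil n/2\rceil$ triangles created by an edge $w_1w_2$ inside $B$, whereas the paper closes by observing that $\lambda x_{w_1}\ge x_{w_2}+\sum_{u\in A}x_u>\lambda x_{u^*}$ contradicts the maximality of $x_{u^*}$; both are equally short and valid.
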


\begin{proof}
Obviously, $t^*=|E(G[A])|$. Now suppose that $t^*=0$. We know that
\begin{eqnarray}\label{eq1}
\lambda^2x_{u^*}=\sum_{u\in A}\sum_{w\in N_G(u)}x_w=|A|x_{u^*}+\sum_{u\in A}d_A(u)x_u+\sum_{w\in B}d_A(w)x_w.
\end{eqnarray}
Let $e(A,B)$ be the number of edges with one endpoint in $A$
and the other in $B$. Notice that $\sum_{w\in B}d_A(w)=e(A,B)\leq |A||B|$.
Then,
\begin{eqnarray}\label{eq2}
\lambda^2x_{u^*}=|A|x_{u^*}+\sum_{w\in B}d_A(w)x_w\leq |A|(|B|+1)x_{u^*}\leq \left\lfloor\frac{n^2}4\right\rfloor x_{u^*}.
\end{eqnarray}
Recall that $\lambda^2\geq\lfloor\frac{n^2}4\rfloor$.
Hence, equality holds in (\ref{eq2}). Thus, $e(A,B)=|A||B|$
and $|A|(|B|+1)=\lfloor\frac{n^2}4\rfloor.$
By Claim \ref{claim1}, $|A|=\lceil\frac n2\rceil$, $|B|=\lfloor\frac n2\rfloor-1$,
and $G$ contains a spanning subgraph $T_{n,2}$.
Since $t^*=0$ and $G\ncong T_{n,2}$,
we have $|E(G[B])|\neq0$ (say $w_1w_2$ is an edge within $B$).
Then $\lambda x_{w_1}\geq x_{w_2}+\sum_{u\in A}x_u>x_{u^*}$,
which contradicts the choice of $u^*$. Therefore, $t^*\geq1.$
If $|B|=0$ then $|A|=n-1$. By (\ref{eq1}), $\lambda^2\leq |A|+2t^*$.
Since $t^*\leq t\leq\lfloor\frac n2\rfloor-2$.
We can easily get $\lambda^2<\lfloor\frac{n^2}4\rfloor$, a contradiction.
\end{proof}

\begin{claim}\label{claim3}
For each edge $uv\in E(G[A])$,
\begin{align*}
x_u+x_v\le \frac{|B|+t+1}{\lambda-1}x_{u^*}.
\end{align*}
If equality holds, then $N_B(u)\cup N_B(v)=B$
and $x_w=x_{u^*}$ for each $w\in B$.
\end{claim}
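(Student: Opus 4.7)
The plan is to add the eigenvalue equations at $u$ and $v$ while carefully separating out the three ``trivial'' contributions coming from the common neighbor $u^*$ and the edge $uv$. Since $u,v\in A=N_G(u^*)$ and $uv\in E(G)$, one has $u^*\in N(u)\cap N(v)$ as well as $v\in N(u)$ and $u\in N(v)$, so I would write
\begin{align*}
\lambda x_u &= x_{u^*}+x_v+\sum_{w\in N(u)\setminus\{u^*,v\}}x_w, \\
\lambda x_v &= x_{u^*}+x_u+\sum_{w\in N(v)\setminus\{u^*,u\}}x_w,
\end{align*}
and add to obtain $(\lambda-1)(x_u+x_v)=2x_{u^*}+S$, where $S$ is the sum of $x_w$ over $w\in C:=(N(u)\cup N(v))\setminus\{u^*,u,v\}$, each weighted by its multiplicity $|\{u,v\}\cap N(w)|\in\{1,2\}$.

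Next, I would compute the total multiplicity. Writing $t(uv)$ for the number of triangles through $uv$, inclusion--exclusion yields $|C\cap N(u)|+|C\cap N(v)|=|C|+|C\cap N(u)\cap N(v)|=|C|+(t(uv)-1)$, the $-1$ removing $u^*$ from $N(u)\cap N(v)$. Bounding $x_w\le x_{u^*}$ for all $w$ then gives
$(\lambda-1)(x_u+x_v)\le (|C|+t(uv)+1)\,x_{u^*}$.

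The combinatorial core is the inequality $|C|+t(uv)\le |B|+t$. Decompose $C=C_A\sqcup C_B$ with $C_A=C\cap A$ and $C_B=C\cap B=N_B(u)\cup N_B(v)$. Trivially $|C_B|\le |B|$, and by inclusion--exclusion $|C_A|\le |N_A(u)\setminus\{v\}|+|N_A(v)\setminus\{u\}|$. On the other hand, $t$ can be lower-bounded by exhibiting three pairwise disjoint families of triangles: the $|N_A(u)\setminus\{v\}|$ triangles $u^*uw$, the $|N_A(v)\setminus\{u\}|$ triangles $u^*vw$, and the $t(uv)$ triangles through edge $uv$. Disjointness is checked by examining which of $u,v$ each triangle contains (family~1 contains $u$ but not $v$, family~2 contains $v$ but not $u$, family~3 contains both), so $t\ge |C_A|+t(uv)$ and combining everything yields the desired bound.

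For the equality statement, equality throughout forces $|C_B|=|B|$ (hence $N_B(u)\cup N_B(v)=B$) and $x_w=x_{u^*}$ for every $w\in C$ of positive multiplicity, in particular for every $w\in C_B=B$. The step I expect to be the main obstacle is the triangle-disjointness bookkeeping in the third paragraph: it is easy either to miscount multiplicities in $S$ or to double-count the triangles used to lower-bound $t$, and the careful separation into the three families is what makes the counts match exactly.
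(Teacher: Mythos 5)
Your proof is correct. Both you and the paper start from the same identity $(\lambda-1)(x_u+x_v)=2x_{u^*}+S$, where $S$ is the multiplicity-weighted sum of $x_w$ over the remaining neighbors, and the content in each case is the combinatorial bound showing the total weight in $S$ is at most $|B|+t-1$. Where you diverge is in how that bound is derived. The paper observes that $t^*:=|E(G[A])|$ is exactly the number of triangles through $u^*$, then splits the total weight as $(d_A(u)+d_A(v)-2)+(d_B(u)+d_B(v))$ and bounds these pieces by $t^*-1$ and $|B|+(t-t^*)$ respectively (the latter via $|N_B(u)\cap N_B(v)|\le t-t^*$). You avoid $t^*$ altogether and instead exhibit three pairwise disjoint families of triangles -- those of the form $u^*uw$ with $w\in N_A(u)\setminus\{v\}$, those of the form $u^*vw$ with $w\in N_A(v)\setminus\{u\}$, and the $t(uv)$ triangles through $uv$ -- to get $t\ge |C_A|+t(uv)$ directly, which together with $|C_B|\le|B|$ gives $|C|+t(uv)\le|B|+t$. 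The two bookkeepings produce the same total-multiplicity bound and the same equality analysis (tightness forces $C_B=B$ and $x_w=x_{u^*}$ on all positive-multiplicity entries, hence on all of $B$). Your version is a touch more self-contained and elementary; the paper's $t^*$-decomposition is slicker and also sets up a quantity it reuses heavily throughout Section 3. Your concern about the disjointness bookkeeping was reasonable, but the check you give is airtight: the three families are distinguished by whether the triangle omits $v$, omits $u$, or contains both.
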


\begin{proof}
Since $t^*=|E(G[A])|$,
we have $d_A(u)+d_A(v)\leq t^*+1$,
and $u,v$ share at most $t-t^*$ common neighbors in $B$.
Thus, $d_B(u)+d_B(v)\leq |B|+t-t^*.$
Consequently,
\begin{align}\label{eq0}
\lambda(x_u+x_v)=\sum_{w\in N_{G}(u)}x_w+\sum_{w\in N_{G}(v)}x_w
\le 2x_{u^*}+(x_v+x_u)+(|B|+t-1)x_{u^*}.
\end{align}
It follows that $x_u+x_v\le \frac{|B|+t+1}{\lambda-1}x_{u^*}.$

If $x_u+x_v=\frac{|B|+t+1}{\lambda-1}x_{u^*}$, then $d_B(u)+d_B(v)=|B|+t-t^*.$
Since $u$ and $v$ share at most $t-t^*$ common neighbors in $B$,
$N_B(u)\cup N_B(v)=B$. Moreover, equality holds in (\ref{eq0}),
which implies that $x_w=x_{u^*}$ for each $w\in B$.
\end{proof}

Now let $B_i=\{w\in B: d_A(w)=|A|-i\}$ and $b_i=|B_i|$ for $i\in\{0,1,2,\ldots,|A|\}$.
Then, it is clear that
\begin{align}\label{eq3}
e(A,B)\leq (|A|-2)|B|+2b_0+b_1.
\end{align}

\begin{claim}\label{claim4}
$b_0\leq\frac t{t^*}-1$, and $b_1\leq t-t^*$ unless $G[A]\cong K_{1,t^*}\cup (|A|-t^*-1)K_1$.
\end{claim}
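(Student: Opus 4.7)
\textbf{Proof plan for Claim \ref{claim4}.} The plan is to count triangles of $G$ by building disjoint families indexed by $u^*$ and by vertices of $B_0 \cup B_1$, and then to convert the resulting inequality into upper bounds for $b_0$ and $b_1$ respectively. First I would note that $u^*$ lies in exactly $t^*$ triangles, each of the form $\{u^*,u,v\}$ with $uv \in E(G[A])$. For each $w\in B_0\cup B_1$, let $T(w)$ be the collection of triangles $\{w,u,v\}$ of $G$ with $uv \in E(G[A])$. Because $u^* \notin B$, the $T(w)$'s are disjoint from the $u^*$-triangles; because each specifies $w$ as its unique non-$A$ vertex, the $T(w)$'s are pairwise disjoint for distinct $w$. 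Hence
\begin{equation*}
t \;\ge\; t^* \;+\; \sum_{w\in B_0}|T(w)| \;+\; \sum_{w\in B_1}|T(w)|.
\end{equation*}

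Next I would compute the two sums. A vertex $w\in B_0$ is adjacent to every vertex of $A$, so $|T(w)|=|E(G[A])|=t^*$ and the first sum equals $t^*b_0$. A vertex $w\in B_1$ misses a unique vertex $a_w\in A$, and $T(w)$ corresponds exactly to edges of $G[A]$ avoiding $a_w$, giving $|T(w)|=t^*-d_{G[A]}(a_w)$. Discarding the $B_1$ terms then yields $t\ge t^*(1+b_0)$; since $t^*\ge 1$ by Claim \ref{claim2}, dividing by $t^*$ gives the first bound $b_0\le \frac{t}{t^*}-1$.

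For the second bound I would invoke the following structural observation about $G[A]$, which has exactly $t^*$ edges: a graph with $t^*$ edges has maximum degree equal to $t^*$ if and only if all its edges share a common endpoint, i.e., it is isomorphic to $K_{1,t^*}$ together with isolated vertices. Consequently, under the hypothesis $G[A]\not\cong K_{1,t^*}\cup(|A|-t^*-1)K_1$, we have $\Delta(G[A])\le t^*-1$, so $t^*-d_{G[A]}(a_w)\ge 1$ for every $w\in B_1$. Dropping the $B_0$ terms from the display then gives $t\ge t^*+b_1$, equivalently $b_1\le t-t^*$. The only genuinely non-routine step is the star-plus-isolated characterization in the second half; the rest is bookkeeping of clearly disjoint triangle families and the elementary observation that $t^*\ge 1$ lets one divide.
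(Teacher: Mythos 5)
Your proof is correct and follows essentially the same route as the paper's: count the $t^*$ triangles through $u^*$, add disjoint triangle families rooted at vertices of $B_0$ (respectively $B_1$), and compare to $t$. The only cosmetic difference is that you make the lower bound $|T(w)| = t^* - d_{G[A]}(a_w) \ge 1$ explicit via the max-degree characterization of $K_{1,t^*}$ plus isolated vertices, whereas the paper states the equivalent fact verbally ("no vertex of $A$ is incident to all edges of $G[A]$").
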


\begin{proof}
For each $w\in B_0$, there are $t^*$ triangles consisting of $w$ and vertices in $A$.
Thus, $b_0t^*\leq t-t^*$, and so $b_0\leq\frac t{t^*}-1$.

Now we consider the upper bound of $b_1$. Assume that $$G[A]\ncong K_{1,t^*}\cup (|A|-t^*-1)K_1.$$
For any vertex in $A$, it cannot be incident to all edges of $G[A]$.
This implies that for any vertex $w\in B_1$,
there is at least one edge $uv\in E(G[A])$ with $w\in N_B(u)\bigcap N_B(v)$.
This gives at least $b_1$ triangles containing vertices in $B_1$.
Therefore, $b_1\leq t-t^*.$
\end{proof}

\begin{claim}\label{claim5}
$b_0=0$.
\end{claim}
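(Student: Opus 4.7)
My plan is to suppose for contradiction that $b_0\geq 1$, pick $w_0\in B_0$, and combine the refined bound of Claim \ref{claim3} with a sharpened triangle count. Since $w_0$ is adjacent to every vertex of $A$, the eigen-equation at $w_0$ gives $\lambda x_{w_0}\geq\sum_{u\in A}x_u=\lambda x_{u^*}$, so by the maximality of $x_{u^*}$ we must have $x_{w_0}=x_{u^*}$ and, moreover, $w_0$ has no neighbors in $B$. In particular, each of the $b_0$ vertices of $B_0$ completes all $t^*$ edges of $G[A]$ into triangles, producing $b_0 t^*$ triangles that are disjoint both from the $t^*$ triangles through $u^*$ and from the $b_1$ triangles through $B_1$ identified in the proof of Claim \ref{claim4}. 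This yields the sharpened inequality $(b_0+1)t^*+b_1\leq t$, equivalently $2b_0+b_1\leq (t-t^*)+b_0(2-t^*)$.

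Next I would expand the eigen-equation at $u^*$,
$$\lambda^2 x_{u^*}=|A|x_{u^*}+\sum_{uv\in E(G[A])}(x_u+x_v)+\sum_{w\in B}d_A(w)x_w,$$
bound the middle sum via Claim \ref{claim3} by $t^*\cdot\frac{|B|+t+1}{\lambda-1}x_{u^*}$, and bound the last sum by $\bigl((|A|-2)|B|+2b_0+b_1\bigr)x_{u^*}$ from (\ref{eq3}) and $x_w\leq x_{u^*}$. Dividing by $x_{u^*}$ gives
$$\lambda^2\leq |A|(|B|+1)-2|B|+2b_0+b_1+t^*\cdot\frac{|B|+t+1}{\lambda-1}.$$
Maximising $(n-|B|-1)(|B|+1)-2|B|$ in $|B|$ (compare Claim \ref{claim1} with $k=2$) produces the uniform slack $|A|(|B|+1)-2|B|\leq\lfloor n^2/4\rfloor-n+3$.

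To finish, I would split on the size of $t^*$. If $t^*\geq 2$, then $b_0(2-t^*)\leq 0$, so $2b_0+b_1\leq t-t^*$; feeding this together with $t\leq\lfloor n/2\rfloor-2$, $t^*\leq t/2$, $|B|+t+1\leq n-2$, and $\lambda\geq\sqrt{\lfloor n^2/4\rfloor}$ into the displayed bound drives the right-hand side strictly below $\lfloor n^2/4\rfloor$. If $t^*=1$, then $G[A]$ is a single edge $uv$, so the middle sum collapses to a single $x_u+x_v\leq\frac{|B|+t+1}{\lambda-1}x_{u^*}$, and a parallel estimate using $b_0\leq t-1$ (from $b_0+1\leq t$) again forces $\lambda^2<\lfloor n^2/4\rfloor$. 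Either way we contradict the hypothesis, so $b_0=0$. The main obstacle is balancing the $2b_0$ slack inside (\ref{eq3}) against the potentially large $t^*$-fold fraction $\frac{|B|+t+1}{\lambda-1}$ coming from Claim \ref{claim3}: it is precisely the disjointness of the $b_0 t^*$ new triangles through $B_0$ from those through $u^*$ and $B_1$ that overcomes this slack and forces the contradiction.
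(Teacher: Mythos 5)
Your main idea — improving on Claim \ref{claim4} by observing that each vertex of $B_0$ contributes $t^*$ fresh triangles, so that $(b_0+1)t^*+b_1\le t$ — is appealing and would indeed give the strict inequality $\lambda^2<\lfloor n^2/4\rfloor$ if it were always true. But the term $b_1$ in that inequality is borrowed from the proof of Claim~\ref{claim4}, and that part of Claim~\ref{claim4} carries an explicit exception: the $b_1$ triangles through $B_1$ are only exhibited when $G[A]\ncong K_{1,t^*}\cup(|A|-t^*-1)K_1$. When $G[A]$ is a star centred at some $u_0$, a vertex $w\in B_1$ whose unique non-neighbour in $A$ is a leaf of the star is adjacent to $u_0$ and hence still yields triangles, but a vertex $w\in B_1$ whose non-neighbour is $u_0$ itself yields none, so $b_1\le t-t^*$ (and a fortiori $(b_0+1)t^*+b_1\le t$) can fail outright. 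You never address this case.

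The failure is not a corner case: it is the heart of the claim. For $t^*=1$, $G[A]$ consists of a single edge together with isolated vertices, which is \emph{exactly} the exceptional configuration $K_{1,1}\cup(|A|-2)K_1$. So for $t^*=1$ the inequality $(b_0+1)t^*+b_1\le t$ is simply unavailable. What you do have unconditionally is $(b_0+1)t^*\le t$ (equivalently $b_0\le t/t^*-1$), and that together with $b_0+b_1\le|B|$ is precisely what the paper uses. If you rerun your estimate in the $t^*=1$ case with only $b_0\le t-1$ and $b_0+b_1\le|B|$, you get $2b_0+b_1\le(t-1)+|B|$, hence
\begin{align*}
\lambda^2\ \le\ |A|(|B|+1)-|B|+(t-1)+\frac{|B|+t+1}{\lambda-1}\ \le\ \Big\lfloor\frac{n^2}{4}\Big\rfloor,
\end{align*}
with \emph{no strict inequality}: this is exactly the boundary case, and your assertion that the parallel estimate ``again forces $\lambda^2<\lfloor n^2/4\rfloor$'' does not hold. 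This is precisely why the paper's proof of Claim~\ref{claim5} does not stop at the numeric bound; it characterises the equality case ($t^*=1$, $b_0=t-1$, $b_1>0$, equality in (\ref{eq5})), picks $w_1\in B_1$, and derives a contradiction from the Perron vector ($x_{w_1}=x_{u^*}$ yet $d_A(w_1)=|A|-1$ forces $x_{w_1}<x_{u^*}$). Your proposal has no substitute for that step, so there is a genuine gap. (Your preliminary observation that any $w_0\in B_0$ satisfies $x_{w_0}=x_{u^*}$ and $N_B(w_0)=\varnothing$ is correct but not used anywhere in the remaining argument.)
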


\begin{proof}
Suppose to the contrary that $b_0\geq1$. Then by Claims \ref{claim2} and \ref{claim4}, we have  $1\leq t^*\leq \frac t2.$
Moreover, $b_0\leq\frac t{t^*}-1$ and $b_0+b_1\leq |B|$ give that $2b_0+b_1\leq|B|+\frac t{t^*}-1.$
Combining with (\ref{eq3}), we have
\begin{align}\label{eq4}
\sum_{w\in B}d_A(w)x_w\leq e(A,B)x_{u^*}\leq (|A||B|-|B|+\frac t{t^*}-1)x_{u^*}.
\end{align}
On the other hand, recall that $|B|\leq\lfloor\frac n2\rfloor-1$, $t\leq \lfloor\frac n2\rfloor-2$
and $\lambda\geq \sqrt{\lfloor\frac{n^2}4\rfloor}\geq \lfloor\frac n2\rfloor.$
Thus, $\frac{|B|+t+1}{\lambda-1}\leq 2$.
Note that $t^*=|E(G[A])|$. Now by Claim \ref{claim3},
\begin{align}\label{eq5}
\sum_{u\in A}d_A(u)x_u=\sum_{uv\in E(G[A])}(x_u+x_v)\leq\frac{|B|+t+1}{\lambda-1}t^*x_{u^*}\leq 2t^*x_{u^*}.
\end{align}
Combining with (\ref{eq1}), (\ref{eq4}) and (\ref{eq5}),
\begin{eqnarray}\label{eq6}
\lambda^2x_{u^*}\leq \left(|A|(|B|+1)-|B|+\frac t{t^*}-1+2t^*\right)x_{u^*}.
\end{eqnarray}
Set $g(|B|):=|A|(|B|+1)-|B|$. Then by Claim \ref{claim1},
$$g(|B|)\leq g(\left\lfloor\frac n2\right\rfloor-1)=\left\lfloor\frac{n^2}4\right\rfloor-(\left\lfloor\frac n2\right\rfloor-1).$$
On the other hand, set $f(t^*):=\frac t{t^*}-1+2t^*$. Observe that $f(t^*)$ is strictly decreasing on $t^*\in [1,\frac t2]$.
Then $f(t^*)\leq f(1)=t+1\leq\lfloor\frac n2\rfloor-1.$
It follows from (\ref{eq6}) that $\lambda^2\leq \lfloor\frac{n^2}4\rfloor.$
This implies that $\lambda^2=\lfloor\frac{n^2}4\rfloor,$ and
so some of above inequalities hold in equality.
Particularly, $t^*=1$, $b_0=\frac t{t^*}-1=t-1$ and $b_1=|B|-b_0$.
Now $B$ is an independent set (otherwise, there are at leat $t+1$ triangles).
Furthermore, $|B|=\lfloor\frac n2\rfloor-1$ and $t=\lfloor\frac n2\rfloor-2$.
Hence, $b_1=|B|-b_0=|B|-t+1>0$.
Let $w_1\in B_1$ and $E(G[A])=\{uv\}$.
Since equality holds in (\ref{eq5}), Claim \ref{claim3} gives $B_1\subseteq N_B(u)\cup N_B(v)$
and $x_{w_1}=x_{u^*}.$
However, by the definition of $B_1$ we have $d_A(w_1)=|A|-1$.
Thus, $x_{w_1}<\sum_{w\in A}x_w=x_{u^*}$, a contradiction.
\end{proof}

\begin{claim}\label{claim6}
$G[A]\cong K_{1,t^*}\cup (|A|-t^*-1)K_1$.
\end{claim}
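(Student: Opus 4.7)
The plan is to argue by contradiction along the same lines as Claim \ref{claim5}, but now exploiting the sharper conclusion of Claim \ref{claim4}. Assume $G[A]\not\cong K_{1,t^*}\cup(|A|-t^*-1)K_1$; then Claim \ref{claim4} gives the extra information $b_1\le t-t^*$, while Claim \ref{claim5} supplies $b_0=0$. Since $d_A(w)\le|A|-1$ for every $w\in B$, with equality only on $B_1$, I would first bound
$$e(A,B)\le (|A|-1)b_1+(|A|-2)(|B|-b_1)=(|A|-2)|B|+b_1\le (|A|-2)|B|+t-t^*.$$
This is the crucial improvement over (\ref{eq3}) in Claim \ref{claim5}: the coefficient of $|B|$ drops to $|A|-2$, which is exactly what will make Claim \ref{claim1} with $k=2$ effective.

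Next I would feed this into the eigen-identity (\ref{eq1}). For the middle sum I would reuse Claim \ref{claim3} verbatim as in (\ref{eq5}): because $|B|\le\lfloor n/2\rfloor-1$, $t\le\lfloor n/2\rfloor-2$, and $\lambda\ge\lfloor n/2\rfloor$, one has $\frac{|B|+t+1}{\lambda-1}\le 2$, so $\sum_{u\in A}d_A(u)x_u\le 2t^*x_{u^*}$. Substituting and using $|A|+|B|=n-1$ gives
$$\lambda^2x_{u^*}\le \bigl(|A|+2t^*+(|A|-2)|B|+t-t^*\bigr)x_{u^*}=\bigl(|A|(|B|+1)-2|B|+t+t^*\bigr)x_{u^*}.$$

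To finish, I would apply Claim \ref{claim1} with $k=2$ to conclude that $|A|(|B|+1)-2|B|$ is maximized at $|B|=\lfloor n/2\rfloor-2$, and a short computation in both parities of $n$ shows that this maximum equals $\lfloor n^2/4\rfloor-(n-3)$. Combined with $t+t^*\le 2t\le 2(\lfloor n/2\rfloor-2)\le n-4$, this would yield $\lambda^2\le \lfloor n^2/4\rfloor-1$, contradicting $\lambda^2\ge \lfloor n^2/4\rfloor$.

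The main obstacle, as I see it, is the final numerical margin: for even $n$ the bound clears $\lfloor n^2/4\rfloor$ by exactly one, so everything hinges on simultaneously exploiting (i) the sharper estimate $b_1\le t-t^*$ from Claim \ref{claim4} (which shaves off one unit of $|B|$ compared with the trivial bound $b_1\le |B|$) and (ii) the hypothesis $t\le \lfloor n/2\rfloor-2$ rather than the weaker $\lfloor n/2\rfloor-1$. Once these two inputs are lined up in the right linear combination, the remaining arithmetic is routine.
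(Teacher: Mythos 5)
Your proposal is correct and follows essentially the same route as the paper's proof: you invoke Claim \ref{claim4} to get $b_1\le t-t^*$, combine with $b_0=0$ from Claim \ref{claim5} to bound $e(A,B)\le(|A|-2)|B|+t-t^*$, reuse (\ref{eq5}), and then apply Claim \ref{claim1} with $k=2$ to push $|B|$ to $\lfloor n/2\rfloor-2$, ending with $\lambda^2\le\lfloor n^2/4\rfloor-1$. The only difference from the paper is cosmetic: the paper observes that $-2|B|+t+t^*\le 0$ at $|B|=\lfloor n/2\rfloor-2$ and then evaluates $|A|(|B|+1)$ alone, whereas you compute $|A|(|B|+1)-2|B|=\lfloor n^2/4\rfloor-(n-3)$ explicitly and add $t+t^*\le n-4$; both give the same margin of $1$.
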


\begin{proof}
Suppose to the contrary, then $b_1\leq t-t^*$ by Claim \ref{claim4}.
Combining this with (\ref{eq3}) and Claim \ref{claim5}, we have $e(A,B)\leq (|A|-2)|B|+t-t^*$,
and so
\begin{align}\label{eq7}
\sum_{w\in B}d_A(w)x_w\leq e(A,B)x_{u^*}\leq ((|A|-2)|B|+t-t^*)x_{u^*}.
\end{align}
Now by (\ref{eq1}), (\ref{eq5}) and (\ref{eq7}),
we have
\begin{eqnarray}\label{eq8}
\lambda^2x_{u^*}\leq \left(|A|(|B|+1)-2|B|+t+t^*\right)x_{u^*}.
\end{eqnarray}
We know that $t+t^*\leq 2t\leq 2(\lfloor\frac n2\rfloor-2)$,
and by Claim \ref{claim1}, the maximum of $|A|(|B|+1)-2|B|$ attains at
$|B|=\lfloor\frac n2\rfloor-2$.
It follows from (\ref{eq8}) that
\begin{eqnarray}\label{eq9}
\lambda^2\leq |A|\left(|B|+1\right)|
_{|B|=\lfloor\frac n2\rfloor-2}
=\left\lfloor\frac{n^2}4\right\rfloor+\left\lfloor\frac n2\right\rfloor-\left\lceil\frac n2\right\rceil-1<\left\lfloor\frac{n^2}4\right\rfloor,
\end{eqnarray}
as $|A|=n-|B|-1=\lceil\frac n2\rceil+1.$
This contradicts $\lambda^2\geq\lfloor\frac{n^2}4\rfloor.$
\end{proof}

\begin{claim}\label{claim7}
$E(G[B])=\varnothing$, that is, $B$ is an independent set.
\end{claim}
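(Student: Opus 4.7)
\noindent\textbf{Proof plan for Claim \ref{claim7}.} I would argue by contradiction: suppose $w_1 w_2 \in E(G[B])$. The overall idea is to follow the template of Claims \ref{claim5} and \ref{claim6}, using a triangle-count restriction to tighten the bound on $\sum_{w \in B} d_A(w) x_w$ in the Perron equation (\ref{eq1}) at $u^*$, and then derive $\lambda^2 < \lfloor n^2/4 \rfloor$, contradicting the hypothesis.

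First, I would count the triangles sitting on the edge $w_1 w_2$. Since $w_1, w_2 \in B$, we have $u^* \notin N(w_1) \cup N(w_2)$, so every triangle through $w_1 w_2$ is disjoint from the $t^*\geq 1$ triangles at $u^*$ (Claim \ref{claim2}). Hence
\[
|N(w_1)\cap N(w_2)| \;\le\; t - t^*.
\]
By inclusion--exclusion and $b_0=0$ (Claim \ref{claim5}, which gives $d_A(w_i)\le |A|-1$),
\[
|N_A(w_1)\cap N_A(w_2)| \;\ge\; d_A(w_1) + d_A(w_2) - |A|,
\]
so I obtain the key degree bound $d_A(w_1)+d_A(w_2) \le |A|+t-t^*$, which strictly improves on the trivial $2(|A|-1)$ whenever $t^* \ge 1$ and $|A| \ge \lceil n/2\rceil$.

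Next, I would plug into (\ref{eq1}). For the middle term, Claim \ref{claim3} together with $\frac{|B|+t+1}{\lambda-1}\le 2$ (already verified in the proof of Claim \ref{claim5}) yields $\sum_{uv\in E(G[A])}(x_u+x_v)\le 2t^* x_{u^*}$. For the last term, I split
\[
\sum_{w\in B} d_A(w) x_w \;=\; d_A(w_1)x_{w_1} + d_A(w_2)x_{w_2} + \!\!\sum_{w\in B\setminus\{w_1,w_2\}}\!\! d_A(w) x_w,
\]
bounding the first pair by $(|A|+t-t^*)x_{u^*}$ via the degree bound above, and the remainder by $(|B|-2)(|A|-1)x_{u^*}$ using $b_0=0$. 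Summing everything, a direct simplification should give something like
\[
\lambda^2 \;\le\; |A|(|B|+1) - |B| + t + t^* + 2 - |A|,
\]
which, together with $|A|(|B|+1)\le \lfloor n^2/4\rfloor$, $|A|\ge \lceil n/2\rceil$, $t\le\lfloor n/2\rfloor-2$ and $t^*\le t$, should produce $\lambda^2 \le \lfloor n^2/4\rfloor$.

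The main obstacle will be closing the equality case, since the inequality derived above is not automatically strict. When equality holds throughout, all the intermediate bounds must be tight: in particular $x_w = x_{u^*}$ for every $w\in B$ with $d_A(w)>0$, the Claim \ref{claim3} bound is tight at every edge of $G[A]$, and $|B|=\lfloor n/2\rfloor-1$, $|A|=\lceil n/2\rceil$, $t^*=t=\lfloor n/2\rfloor-2$. I would then argue, exactly as at the end of Claim \ref{claim5}'s proof, that $x_w = x_{u^*}$ for some $w\in B$ combined with $b_0=0$ (so $w$ misses some $a\in A$) forces
\[
\lambda x_w \;=\; \sum_{v\sim w} x_v \;<\; \sum_{a\in A} x_a \;=\; \lambda x_{u^*},
\]
contradicting $x_w=x_{u^*}$. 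This equality-case analysis, guided by the rigid structure $G[A]\cong K_{1,t^*}\cup(|A|-t^*-1)K_1$ from Claim \ref{claim6}, is the most delicate step and will require careful bookkeeping of which inequalities are tight.
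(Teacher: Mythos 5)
Your approach is correct and in fact gives a cleaner argument than the paper's. The paper parametrizes the two endpoints of the bad edge as $w_i\in B_i$, $w_j\in B_j$ and then splits into two cases depending on whether $i+j\le |B|+1$ or $i+j\ge|B|+2$, handling them with bounds (\ref{eq10}) and (\ref{eq11}) and the two different instances of Claim \ref{claim1}. Your single inequality $d_A(w_1)+d_A(w_2)\le |A|+t-t^*$, obtained from inclusion--exclusion in $A$ together with the disjointness of triangles on $w_1w_2$ from the $t^*$ triangles at $u^*$, covers both of the paper's cases simultaneously and avoids that branching.

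Where you are unnecessarily cautious is the equality case, which you flag as the ``most delicate step.'' It in fact never arises. Tracking your own bound through, from (\ref{eq1}), (\ref{eq5}), $b_0=0$, and your degree estimate you get
\[
\lambda^2\;\le\; |A|+2t^*+(|A|+t-t^*)+(|B|-2)(|A|-1)
\;=\;|A|(|B|+1)-(|A|+|B|)+t+t^*+2.
\]
Since $|A|+|B|=n-1$, $t^*\le t\le\lfloor n/2\rfloor-2$, and $2\lfloor n/2\rfloor\le n$, the additive correction $-(n-1)+t+t^*+2$ is at most $2\lfloor n/2\rfloor-n-1\le -1$; combined with $|A|(|B|+1)\le\lfloor n^2/4\rfloor$ (Claim \ref{claim1} with $k=0$), this gives $\lambda^2\le\lfloor n^2/4\rfloor-1$, a strict contradiction. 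No equality bookkeeping, tightness of Claim \ref{claim3}, or appeal to Claim \ref{claim6} is needed to close the claim.
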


\begin{proof}
Suppose that there exists $w_iw_j\in E(G[B]),$
say $w_i\in B_i$ and $w_j\in B_j$.
By (\ref{eq1}) and (\ref{eq5}), we have
\begin{eqnarray}\label{eq10}
\lambda^2\leq |A|+e(A,B)+2t^*.
\end{eqnarray}
Moreover, since $b_0=0$, we have
\begin{eqnarray}\label{eq11}
e(A,B)\leq (|A|-1)|B|-(i+j-2).
\end{eqnarray}

If $i+j\leq |B|+1$, then $t^*\leq t-|A|+(i+j)$,
since $d_A(w_i)=|A|-i$, $d_A(w_j)=|A|-j$ and
$w_i,w_j$ have at least $d_A(w_i)+d_A(w_j)-|A|$ common neighbors.
Combining with (\ref{eq10}) and (\ref{eq11}),
we obtain $\lambda^2\leq |A|(|B|+1)-|B|+2t-2|A|+(i+j+2).$
Note that $i+j\leq |B|+1$, $|A|\geq\lceil\frac n2\rceil$ and $t\leq \lfloor\frac n2\rfloor-2$.
Then
\begin{eqnarray*}
\lambda^2\leq |A|(|B|+1)+2t-2|A|+3<|A|(|B|+1),
\end{eqnarray*}
and so $\lambda^2<\lfloor\frac{n^2}4\rfloor$. This contradicts $\lambda^2\geq\lfloor\frac{n^2}4\rfloor.$

If $i+j\geq|B|+2$, then by (\ref{eq11})
$e(A,B)\leq (|A|-2)|B|$.
Combining with (\ref{eq10}), we have $\lambda^2\leq |A|(|B|+1)-2|B|+2t^*.$
Note that $t\leq \lfloor\frac n2\rfloor-2$;
and by Claim \ref{claim1} the maximum of $|A|(|B|+1)-2|B|$ attains at
$|B|=\lfloor\frac n2\rfloor-2$. Thus,
we have (\ref{eq9}), a contradiction.
\end{proof}

\begin{claim}\label{claim8}
Let $u_0$ be the central vertex of the star $K_{1,t^*}$ in $G[A]$.
Then $d_B(u_0)=0$.
\end{claim}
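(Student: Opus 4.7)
The plan is to assume, for a contradiction, that $d_B(u_0)\ge 1$ and derive $\lambda^2<\lfloor n^2/4\rfloor$, contradicting the standing hypothesis $\lambda^2\ge\lfloor n^2/4\rfloor$. Using Claims~\ref{claim5}--\ref{claim7}, the ambient structure is fixed: $b_0=0$, $G[A]\cong K_{1,t^*}\cup(|A|-t^*-1)K_1$ with center $u_0$ and leaves $v_1,\ldots,v_{t^*}$, and $B$ is an independent set. Set $A_0:=A\setminus\{u_0,v_1,\ldots,v_{t^*}\}$, $B':=N_B(u_0)$, and $B'':=B\setminus B'$; the contrary hypothesis reads $|B'|\ge 1$.

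Starting from
\[
\lambda^2 x_{u^*}=|A|x_{u^*}+\sum_{i=1}^{t^*}(x_{u_0}+x_{v_i})+\sum_{w\in B}d_A(w)\,x_w,
\]
Claim~\ref{claim3} applied to each of the star edges $u_0v_i$, together with $\lambda\ge\lfloor n/2\rfloor$ and $|B|+t+1\le 2(\lambda-1)$, bounds the middle sum by $2t^*x_{u^*}$. For the last sum the trivial estimate $(|A|-1)|B|\,x_{u^*}$ is insufficient, so I refine by splitting across $B'$ and $B''$. For $w\in B'$, I decompose
\[
d_A(w)=1+|N_A(w)\cap\{v_1,\ldots,v_{t^*}\}|+|N_A(w)\cap A_0|,
\]
use $|N_A(w)\cap A_0|\le|A|-t^*-1$, and sum using the identity $\sum_{w\in B'}|N_A(w)\cap\{v_1,\ldots,v_{t^*}\}|=t-t^*$ (the count of triangles of the form $u_0v_iw$); for $w\in B''$, I only use $d_A(w)\le|A|-1$, coming from $b_0=0$. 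Collecting,
\[
\sum_{w\in B}d_A(w)\le(|A|-1)|B|-(t^*-1)|B'|+(t-t^*).
\]

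Plugging this into the eigenequation and invoking Claim~\ref{claim1} to maximize $|A|(|B|+1)-|B|$ at $|B|=\lfloor n/2\rfloor-1$ yields an inequality of the form $\lambda^2\le\lfloor n^2/4\rfloor+(\text{arithmetic slack})-(t^*-1)|B'|$. Whenever $t^*\ge 2$, the subtracted penalty already exceeds the slack for all relevant $n$, yielding the desired contradiction at once.

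The main obstacle is the residual case $t^*=1$, where the combinatorial penalty $-(t^*-1)|B'|$ disappears. To close it, I will extract a quantitative improvement from the strict inequality $x_{u_0}<x_{u^*}$, obtained by subtracting the eigenequations for $u^*$ and $u_0$,
\[
(\lambda+1)(x_{u^*}-x_{u_0})=\sum_{u\in A_0}x_u-\sum_{w\in B'}x_w\ge 0.
\]
The lower bound on $x_{u^*}-x_{u_0}$ thereby extracted (proportional to $|B'|\ge 1$ after estimating the two sums) sharpens the Claim~\ref{claim3} bound on $x_{u_0}+x_{v_1}$ by a strictly positive amount, which is enough to push $\lambda^2$ below $\lfloor n^2/4\rfloor$ in this case as well; the finitely many small values of $n$ for which the arithmetic slack remains comparable to the gain are then verified directly.
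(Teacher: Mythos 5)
Your decomposition is genuinely different from the paper's. The paper fixes a single neighbor $w_0\in N_B(u_0)$ with $d_A(w_0)=|A|-i$, bounds $e(A,B)\le(|A|-1)|B|-i+1$, and then splits on whether $i\ge t^*+1$ or $i\le t^*$. You instead bound $\sum_{w\in B}d_A(w)$ globally using the exact identity $\sum_{w\in B'}|N_A(w)\cap\{v_1,\dots,v_{t^*}\}|=t-t^*$ (which is correct: $B$ is independent by Claim~\ref{claim7}, so every triangle avoiding $u^*$ has the form $u_0v_iw$ with $w\in B'$). That identity and the resulting bound $\sum_{w\in B}d_A(w)\le(|A|-1)|B|-(t^*-1)|B'|+(t-t^*)$ are sound.

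However, the claim that ``whenever $t^*\ge2$ the subtracted penalty already exceeds the slack, yielding the desired contradiction at once'' is false. Your computation gives
$\lambda^2\le\lfloor n^2/4\rfloor-(\lfloor n/2\rfloor-1)+t+t^*-(t^*-1)|B'|$; with $t\le\lfloor n/2\rfloor-2$ the slack is at most $t^*-1$, while the penalty is $(t^*-1)|B'|$. When $|B'|=1$ (which is allowed, since the contrary hypothesis is only $d_B(u_0)\ge1$) these are equal, so for any $t^*\ge1$ you only obtain $\lambda^2\le\lfloor n^2/4\rfloor$, not a strict inequality. So the ``at once'' conclusion fails, and the same shortfall makes your separate $t^*=1$ plan unnecessary and not clearly executable: the identity $(\lambda+1)(x_{u^*}-x_{u_0})=\sum_{u\in A_0}x_u-\sum_{w\in B'}x_w$ need not be bounded below by anything proportional to $|B'|$, since the right-hand side can vanish. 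What closes both residual cases is the equality analysis the paper uses: if $\lambda^2=\lfloor n^2/4\rfloor$ then equality must hold in (\ref{eq5}) and in $\sum_{w\in B}d_A(w)x_w\le\sum_{w\in B}d_A(w)x_{u^*}$, forcing (via Claim~\ref{claim3}) $x_w=x_{u^*}$ for every $w\in B$; but $b_0=0$ and $B$ independent give $\lambda x_w=\sum_{u\in N_A(w)}x_u<\sum_{u\in A}x_u=\lambda x_{u^*}$, a contradiction. Adding this equality step, and dropping the $t^*=1$ detour, would make your argument complete.
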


\begin{proof}
Suppose to the contrary that there exists some integer
$i\in \{1,2,\ldots,|A|\}$ and $w_0\in B_i$ such that $u_0w_0$ is an edge.
Note that $d_A(w_0)=|A|-i$ and $b_0=0$. Then $e(A,B)\leq (|A|-1)|B|-i+1.$
If $i\geq t^*+1$ then
\begin{align}\label{eq12}
\sum_{w\in B}d_A(w)x_w\leq e(A,B)x_{u^*}\leq ((|A|-1)|B|-t^*)x_{u^*}.
\end{align}
Now by (\ref{eq1}), (\ref{eq5}) and (\ref{eq12}),
we have
\begin{eqnarray}\label{eq13}
\lambda^2x_{u^*}\leq \left(|A|(|B|+1)-|B|+t^*\right)x_{u^*}.
\end{eqnarray}
Recall that $t^*\leq t\leq \lfloor\frac n2\rfloor-2$ and the maximum of $|A|(|B|+1)-|B|$ attains at
$|B|=\lfloor\frac n2\rfloor-1$.
It follows from (\ref{eq13}) that
\begin{eqnarray*}
\lambda^2\leq |A|(|B|+1)-1\big |_{|B|=\lfloor\frac n2\rfloor-1}
=\big\lfloor\frac{n^2}4\big\rfloor-1,
\end{eqnarray*}
as $|A|=n-|B|-1=\lceil\frac n2\rceil.$
This contradicts $\lambda\geq\lfloor\frac{n^2}4\rfloor.$
Therefore, $i\leq t^*$.

Now, there are at least $t^*-i$ triangles consisting of $u_0$,
$w_0$ and their common neighbors in $A$.
Thus, $t^*+t^*-i\leq t$, that is, $2t^*\leq t+i.$
Combining with (\ref{eq1}), (\ref{eq5}) and $e(A,B)\leq (|A|-1)|B|-i+1$, we have
\begin{eqnarray}\label{eq14}
\lambda^2x_{u^*}\leq \left(|A|(|B|+1)-|B|+t+1\right)x_{u^*}.
\end{eqnarray}
Since $t\leq \lfloor\frac n2\rfloor-2$ and the maximum of $|A|(|B|+1)-|B|$ attains at
$|B|=\lfloor\frac n2\rfloor-1$, by (\ref{eq14}) we have
$\lambda^2\leq |A|(|B|+1)\big |_{|B|=\lfloor\frac n2\rfloor-1}
=\lfloor\frac{n^2}4\rfloor.$ This implies $\lambda^2=\lfloor\frac{n^2}4\rfloor,$
and so each of above inequalities holds in equality.
Particularly, equality holds in (\ref{eq5}), and by Claim \ref{claim3},
$x_{w_0}=x_{u^*}$. However, by Claim \ref{claim7}, $E(G[B])=\varnothing$;
and by the definition of $B_i$ we have $d_A(w_0)=|A|-i$.
Thus, $x_{w}<\sum_{u\in A}x_u=x_{u^*}$, a contradiction.
\end{proof}

By Claims \ref{claim6}-\ref{claim8},
all triangles must contain $u^*$, that is, $t=t^*$.
Since $\lambda(G)$ is maximum and $d_B(u_0)=0$, we can find that
$B=B_1$, more precisely, $N_G(w)=A\setminus \{u_0\}$ for each $w\in B$.
Set $A_0:=\{u\in A: d_A(u)=0\}$ and $A_1:=A\setminus A_0$.
Note that $G[A_1]\cong K_{1,t}$. Then $|A_1|=t+1$.
Combining with $|A|\geq \lceil\frac n2\rceil$ and $t\leq\lfloor\frac n2\rfloor-2$,
we get $|A|\geq t+2$, with equality if and only if $|A|=\frac n2$ and $t=\frac n2-2$.
Next, we shall finish the final part by considering two cases.

First assume that $|A|\geq t+3$. If $\sum_{u\in A_0}x_u\geq x_{u^*}$,
then define $G'=G-\{u_0u^*\}+\{u_0u: u\in A_0\}$.
Clearly, $G'$ is a bipartite graph and so $t(G')=0<\lfloor\frac n2\rfloor-2$; but
$\lambda(G')-\lambda(G)\geq 2x_{u_0}\big(\sum_{u\in A_0}x_u-x_{u^*}\big)\geq0$.
Furthermore, $\lambda(G')>\lambda(G)$ (otherwise, $X$ is also the Perron vector of $G'$
and thus $\lambda(G)x_{u^*}=\lambda(G')x_{u^*}+x_{u_0}$, a contradiction).
This contradicts either the maximality of $\lambda(G)$ or the assumption
$\lambda(G)\geq \sqrt{\lfloor\frac{n^2}4\rfloor}=\lambda(T_{n,2})$.
Therefore, $\sum_{u\in A_0}x_u<x_{u^*}$;
and since $N_G(u)=B\cup \{u^*\}$ for each $u\in A_0$,
we have $\sum_{u\in A_0}\lambda x_u=|A_0|(x_{u^*}+\sum_{w\in B}x_w).$
It follows that
\begin{eqnarray}\label{eq15}
\sum_{w\in B}x_w<\frac {\lambda-|A_0|}{|A_0|}x_{u^*}\leq \frac{t+1}{|A|-t-1}x_{u^*},
\end{eqnarray}
as $\lambda\leq |A|$ and $|A_0|=|A|-t-1.$
Furthermore,  by (\ref{eq1}), (\ref{eq5}) and (\ref{eq15}),
\begin{eqnarray}\label{eq16}
\lambda^2x_{u^*}\leq (|A|+2t)x_{u^*}+\sum_{w\in B}d_A(w)x_w<
\left(|A|+2t+\frac{(|A|-1)(t+1)}{|A|-t-1}\right)x_{u^*},
\end{eqnarray}
as $d_A(w)=|A|-1$ for each $w\in B$. Set $h(|A|):=|A|+2t+\frac{(|A|-1)(t+1)}{|A|-t-1}$.
Note that $t\leq\lfloor\frac n2\rfloor-2$, $|A|\in [t+3,n-2]$ and $h(|A|)$
is a concave function on $|A|$. One can check that
$h(|A|)\leq \max\{h(t+3),h(n-2)\}\leq\lfloor\frac{n^2}4\rfloor$,
and so $\lambda^2<\lfloor\frac{n^2}4\rfloor$,
a contradiction.

Now assume that $|A|=t+2$. Then $|A|=\frac n2$ and $t=\frac n2-2$.
This implies that $n$ is even and $|B|=\frac n2-1$.
Moreover, since $\lambda\leq |A|$ and $\lambda^2\geq \lfloor\frac{n^2}4\rfloor=|A|^2$,
we get $\lambda=|A|.$
For each $w\in B$, recall that $N_G(w)=A\setminus \{u_0\}$, thus
$\lambda x_w=\sum_{u\in A\setminus \{u_0\}}x_{u}= \lambda x_{u^*}-x_{u_0}.$
Consequently,
\begin{eqnarray}\label{eq17}
\sum_{w\in B}d_A(w)x_w=(|A|-1)|B|x_w=(|A|-1)^2x_{u^*}-\frac{(|A|-1)^2}{|A|}x_{u_0}.
\end{eqnarray}
Note that $t=|A|-2$ and so $|A_1|=t+1=|A|-1$.
Then $\sum_{u\in A_1\setminus \{u_0\}}x_u\leq (|A|-2)x_{u^*}.$
Combining this with (\ref{eq1}) and (\ref{eq17}), we obtain
\begin{eqnarray*}
\lambda^2x_{u^*}\leq |A|x_{u^*}+\sum_{u\in A_1\setminus \{u_0\}}x_u+tx_{u_0}+\sum_{w\in B}d_A(w)x_w\leq
(|A|^2-1)x_{u^*}-\frac1{|A|}x_{u_0}.
\end{eqnarray*}
Consequently, $\lambda^2<|A|^2-1<\lfloor\frac{n^2}4\rfloor,$ a contradiction.
This completes the proof. $\hfill\blacksquare$

\section{Concluding remarks}
In this paper, we have studied the relationship between spectral radius,
the order and size of a graph, and the number of triangles.
We prove two quantitative versions of the classical Nosal's theorem.
Several open problems are left.

Recall the Lov\'asz-Simonovits Theorem (i.e., originally conjectured
by Erd\H{o}s in 1962) states that every graph on $n$
vertices contains $k\lfloor\frac{n}{2}\rfloor$ triangles if $e(G)>\frac{n^2}{4}+k$
where $k<\frac{n}{2}$. Is there a pure spectral proof of Erd\H{o}s' conjecture?
In this direction, Nosal \cite{N70} (see also Theorem 7.26
in \cite[pp.~222]{CDS80}) once proved that
$t(G)\geq k\lfloor\frac{n}{4}\rfloor$ if $n$ is even.
It is also interesting to ask
a spectral analog of Erd\H{o}s' conjecture.

In view of the fact that the family of triangles is just a special case of cliques,
we would like to mention the Bollob\'as-Nikiforov Conjecture \cite{BN07} again:
Every $K_{r+1}$-free graph on at least $r+1$ vertices and $m$ edges
satisfies that $\lambda^2_1+\lambda^2_2\leq \frac{r-1}{r}\cdot 2m$.
This conjecture is still open for $r\geq 3$. For the case of triangles,
is there some interesting phenomenon when we consider the relationship
between the number of triangles and signless Laplacian
spectral radius, Laplacian spectral radius, distance spectral radius
and etc?

Despite much research has been done, the relationship between eigenvalues
and subgraphs of a graph is still mysterious and unclear. We conclude this paper
by mentioning a recent conjecture of Elphick, Linz, and Wocjan \cite[Conjecture~1]{ELW21} as follows:
For any non-empty graph $G$, the clique number $\omega$ satisfies that
$\lambda^2_1+\lambda^2_2+\ldots+\lambda^2_{\ell}\leq \frac{\omega-1}{\omega}\cdot2m$,
where $\ell=\min\{n^+,\omega\}$ and $n^+$ is the number of positive
eigenvalues of $A(G)$.

\section*{Acknowledgment}
The authors thank Xueyi Huang, Michael Tait and Zhiwen Wang for carefully reading an early draft
of this paper and for helpful comments.


\begin{thebibliography}{99}
\setlength{\itemsep}{0pt}

\bibitem{B78}
B. Bollob\'as, Extremal graph theory. London Mathematical Society Monographs,
11. Academic Press, Inc. [Harcourt Brace Jovanovich, Publishers], London-New York,
1978. xx+488 pp. ISBN: 0-12-111750-2.

\bibitem{BN07}
B. Bollob\'as, V. Nikiforov, Cliques and the spectral radius.
\emph{J. Combin. Theory Ser. B} {\bf 97} (2007), no. 5, 859--865.

\bibitem{CH93}
A. Brouwer, W.H. Haemers, Spectra of Graphs, Springer, New York, 2012.

\bibitem{CDS80}
D.M. Cvetkovi\'{c}, M. Doob, H. Sachs, Spectra of graphs.
Theory and application. Pure and Applied Mathematics, 87. Academic Press, Inc.
[Harcourt Brace Jovanovich, Publishers], New York-London, 1980.
368 pp. ISBN: 0-12-195150-2.

\bibitem{EE83}
C. Edwards, C. Elphick, Lower bounds for the clique and the chromatic number of
a graph, \emph{Discrete Appl. Math.} {\bf 5} (1983) 51--64.

\bibitem{ELW21}
C. Elphick, W. Linz, P. Wocjan,
Generalising a conjecture due to Bollob\'as and Nikiforov,
arXiv:2101.05229 (2021).

\bibitem{E55}
P. Erd\H{o}s, Some theorems on graphs,
\emph{Riv. Lemat.} {\bf 9} (1955) 13--17. (in Hebrew with English summary)

\bibitem{E62}
P. Erd\H{o}s,
On a theorem of Rademacher-Tur\'{a}n,
\emph{Illinois J. Math.} {\bf 6} (1962) 122--127.

\bibitem{E62-2}
P. Erd\H{o}s, On the number of complete subgraphs contained in certain graphs, Magy.
\emph{Tud. Acad. Mat. Kut. Int. K\H{o}zl.} {\bf 7} (1962) 459--474 005, no. 4, 467--484.

\bibitem{EFGG95}
P. Erd\H{o}s, Z. F\"{u}redi, R. J. Gould, and D. S. Gunderson. Extremal
graphs for intersecting triangles, \emph{J. Combin. Theory. Ser. B} {\bf64} (1995),
no. 1, 89--100.

\bibitem{55}
S. Fisk, A very short proof of Cauchy's interlace theorem for eigenvalues of
Hermitian matrices, \emph{Amer. Math. Monthly} {\bf 112} (2005), no. 2, 118.

\bibitem{G96}
B.D. Guiduli,
Spectral extrema for graphs. Ph.D.
Thesis, The University of Chicago. 1996. 99 pp. ISBN: 978-0591-18642-0.

\bibitem{LNW21}
H.Q. Lin, B. Ning, B. Wu, Eigenvalues and triangles in graphs,
 \textit{Combin. Probab. Comput.} \textbf{30} (2021) 258--270.

\bibitem{LS83}
L. Lov\'{a}sz, M. Simonovits, On the Number of Complete Subgraphs of a Graph, II,
in: Studies in Pure Math, Birkh\"{a}user, 1983, pp. 459--495.

\bibitem {M07}
W. Mantel,
Problem 28, soln. by H. Gouventak, W. Mantel, J. Teixeira de Mattes, F. Schuh and W.A. Wythoff.
\textit{Wiskundige Opgaven}, \textbf{10} (1907) 60--61.

\bibitem{M10}
D. Mubayi, Counting substructures I: color critical graphs,
\emph{Adv. Math.} {\bf 225} (2010), no. 5, 2731--2740.

\bibitem{N09}
V. Nikiforov, Walks and the spectral radius of graphs, \emph{Linear Algebra Appl.}
\textbf{418} (2006) 257--268.

\bibitem{N09-2}
 V. Nikiforov,
 Spectral saturation: inverting the spectral Tur\'{a}n theorem,
 \textit{Electronic J. Combin.} \textbf{16} (2009), no. 1, Research Paper 33, 9 pp.

 \bibitem{N21}
V. Nikiforov, On a theorem of Nosal, arXiv:2104.12171 (2021).

\bibitem{N17}
B. Ning,
On some papers of Nikiforov,
\emph{Ars Combin.} {\bf 135} (2017) 187--195.

\bibitem{N70}
E. Nosal, Eigenvalues of Graphs, Master Thesis, University of Calgary, 1970.

\bibitem{S68}
M. Simonovits, A method for solving extremal problems in graph theory,
stability problems. 1968 Theory of Graphs (Proc. Colloq., Tihany, 1966) pp. 279--319,
Academic Press, New York.

\bibitem{ZLS21}
M.Q. Zhai, H.Q. Lin, and J.L Shu, Spectral extrema of graphs of fixed size: cyles and
complete bipartite graphs, \textit{European J. Combin.} \textbf{95} (2021) 103322, 18 pp.

\bibitem{ZL21}
M.Q, Zhai, H.Q. Lin,
A strengthening of the spectral color critical edge theorem: books and theta graphs, arxiv:2102.04041 (2021).

\bibitem{ZS21}
M.Q. Zhai, J.L. Shu, A spectral version of Mantel's theorem,
\emph{Discrete Math.} {\bf 345} (2022), no. 1, Paper No. 112630.

\end{thebibliography}
\end{document}